\newtheorem{theorem}{Theorem}[]
\newtheorem{lemma}{Lemma}[]
\newtheorem{proposition}{Proposition}[]
\newtheorem{definition}[]{Definition}
\newtheorem{remark}{Remark}[]
\newcommand{\psig}{\psi_n(\xi_i,\theta)}
\newcommand{\psion}{\psi_n(\xi_i,\theta_0^{(n)})}
\newcommand{\psigK}{\psi_n(\xi_i,\theta,K)}
\newcommand{\psionK}{\psi_n(\xi_i,\theta_0^{(n)},K)}
\newcommand{\psigKc}{\psi_n(\xi_i,\theta,K^c)}
\newcommand{\psigKcj}{\psi_{n,j}(\xi_i,\theta,K^c)}
\newcommand{\psionKc}{\psi_n(\xi_i,\theta_0^{(n)},K^c)}
\newcommand{\psionKcj}{\psi_{n,j}(\xi_i,\theta_0^{(n)},K^c)}
\newcommand{\thg}{\theta}
\newcommand{\la}{\lambda}
\newcommand{\Thn}{\Theta_n}
\newcommand{\tho}{\theta_0}
\newcommand{\thon}{\theta_0^{(n)}}
\newcommand{\ltgK}{\lambda_n(\theta,K)}
\newcommand{\juur}{\frac{1}{\sqrt{n}}}
\begin{document}
\thispagestyle{empty}
\setlength{\parindent}{0cm}
\textbf{
\Large{\centerline{Asymptotic normality of generalized maximum spacing}
	\centerline{estimators for multivariate observations}}}
%
%
\setlength{\parindent}{18pt}
\newline \newline
\centerline{Kristi Kuljus$^a$ and Bo Ranneby$^b$}
\newline \newline
\centerline{$^a$Institute of Mathematics and Statistics, University of Tartu}
\centerline{$^b$Department of Forest Economics, Swedish University of Agricultural Sciences}
\newline
\begin{abstract}
In this paper, the maximum spacing method is considered for multivariate observations.
Nearest neighbour balls are used as a multidimensional analogue to univariate spacings. A class of information-type measures
is used to generalize the concept of maximum spacing estimators. Asymptotic normality of these generalized maximum spacing estimators is proved when the assigned model class is correct,
that is the true density is a member of the model class.

\textbf{Key words}: asymptotic normality, consistency, divergence measures, maximum spacing estimation, nearest neighbour balls.
\end{abstract}
\section{Generalized maximum spacing estimators}
\subsection{Introduction}
For independent and identically distributed univariate observations a new estimation method, the maximum spacing (MSP) method,
was defined in \citet{Ranneby84} and independently by \citet{ChengAmin}. In \citet{RanJamTet}, the MSP method was extended to multivariate observations for the Kullback-Leibler information measure. In \citet{KR2015}, the multivariate maximum spacing estimation method based on nearest neighbour balls was considered for a broader class of information-type measures. Weak and strong consistency of these generalized maximum spacing (GMSP) estimators under general conditions was proved. In the univariate case such GMSP estimators based on different metrics were studied in \citet{RanEkstr97}, \citet{Ekstr2001} and \citet{GhoshJam},
 in the last work also asymptotic normality of GMSP estimates was proved. Consistency and asymptotic normality of GMSP estimates in the univariate case was also considered in \citet{luong}.
As exemplified already in \citet{Ranneby84}, an advantage of the maximum spacing method compared to the maximum likelihood method is the possibility of checking the validity of the assigned model class at the same time with solving the estimation problem. In \citet{KR2015} it was demonstrated that combining information from spacing functions under different divergence measures can provide further insight in the model validation context. In the present paper we study asymptotic normality of GMSP estimators for information-type measures considered in \citet{KR2015}.
\subsection{Notation and definitions}
Let $\xi_1,\ldots,\xi_n$ be a sequence of independent and identically distributed $d$-dimensional random vectors with distribution $P_0$ that is absolutely continuous with respect to Lebesgue measure. Let the corresponding density function be $g(x)$. Define the nearest neighbour distance to the random variable $\xi_i$ as
\[R_n(i)= \min_{j\neq i} |\xi_i-\xi_j| \, , \quad \quad i=1,\ldots,n \, .\]
Let $B(x,r)=\{ y:\,|x-y|\le r\}$ denote the ball of radius $r$ and center $x$. Let $\mbox{NN}_i$ denote the nearest neighbour of $\xi_i$ and let $B_n(\xi_i)$ denote its nearest neighbour ball, i.e.~this is a ball with center $\xi_i$ and radius $R_n(i)$. Suppose we assign a model with density functions $\{f_\theta (x), \, \theta\in \Theta\}$, where
$\Theta \subset \mathbb{R}^q$, and assume that the true density $g(x)$ belongs to the family with the parameter vector given by $\theta_0$. Define random variables $z_{i,n}(\theta)$ as
\[ z_{i,n}(\theta)=nP_\theta(B_n(\xi_i))=n\int_{B_n(\xi_i)} f_{\thg}(y)dy, \quad \quad i=1,\ldots,n \, .\]
In \citet{KR2015} the maximum spacing method was generalized to multivariate observations for strictly concave functions $h: (0,\infty) \rightarrow (-\infty,0]$ with maximum at $x=1$.
The generalized maximum spacing function $S_n(\theta)$ was defined as
\[S_n(\theta) =\frac{1}{n} \sum_{i=1}^n h(z_{i,n} (\theta)) \, .\]
\begin{definition}
The parameter value that maximizes $S_n(\theta)$ is called the generalized maximum spacing estimate (GMSP estimate) of $\theta$
 and denoted by $\hat{\theta}_n$.
If $\sup_{\theta} S_n(\theta)$ is not attained for any $\theta$ in the admissible set $\Theta$, then the GMSP estimate $\hat{\theta}_n$ is defined as any point of $\Theta$ that satisfies
\[ S_n(\hat{\theta}_n)\ge -c_n +\sup_{\theta \in \Theta} S_n(\theta)\, ,\]
where $c_n>0$ is a sequence of constants such that $c_n\to 0$ as $n\to \infty$.
\end{definition}
\noindent Examples of functions $h$ satisfying the conditions given above are:
 \[ h_1(x)=\ln x-x+1 , \quad h_2(x)=(1-x)\ln x , \quad h_3(x)=-|1-x^{1/p}|^p , \]
 \[ h_4(x)=-|1-x|^p , \quad h_5 (x) = \mbox{sgn}(1-\alpha)(x^{\alpha}-\alpha x +\alpha -1),  \]
where $\alpha>0$, $\alpha \neq 1$, and $p\geq 1$.
Here $h_2$ corresponds to Jeffreys' divergence measure, $h_3$ to the Hellinger distance, $h_4$ to Vajda's measure of information and
$h_5$ to R\'{e}nyi's divergence measure. In this article, we will consider only $p=2$ for function families $h_3$ and $h_4$. For $h_5$, we restrict $\alpha$ to
$(0,1)\cup (1,2]$. Observe that $\alpha=1/2$ corresponds to $h_3$ with $p=2$ and $\alpha=2$ corresponds to $h_4$ with $p=2$. Thus, $h_3$ and $h_4$ with $p=2$ will be covered by $h_5$.

To prove asymptotic normality of $\hat{\theta}_n$, we will work with the partial derivatives of $h(z_{i,n}(\thg))$, the vector of partial derivatives is denoted by $\psig$. Let
$v(z_{i,n}(\thg))=h'(z_{i,n}(\thg))z_{i,n}(\thg)$. Then
\[ \psig= v(z_{i,n}(\thg)) \frac{1}{P_{\thg}(B_n(\xi_i))} \int_{B_n(\xi_i)} \nabla f_{\thg} (y) dy.    \]
Define $\tilde{\psi}_n(\xi_i,\thg)$ and ${\psi}(\xi_i,\thg)$ as follows:
\[ \tilde{\psi}_n(\xi_i,\thg)=v\left( z_{i,n}(\theta_0) \frac{f_{\thg}(\xi_i)}{g(\xi_i)} \right) \frac{\nabla f_{\thg}(\xi_i)}{f_{\thg}(\xi_i)}, \quad
 {\psi}(\xi_i,\thg)= v \left( Z \frac{f_{\thg}(\xi_i)}{g(\xi_i)} \right) \frac{\nabla f_{\thg}(\xi_i)}{f_{\thg}(\xi_i)}. \]
Observe that $z_{i,n}(\thg)$ can be written as
\[ z_{i,n}(\thg)= z_{i,n}(\theta_0) \frac{1}{P_{\theta_0}(B_n(\xi_i))} \int_{B_n(\xi_i)} f_{\thg} (y) dy. \]
Since
\[ \frac{1}{P_{\theta_0}(B_n(\xi_i))} \int_{B_n(\xi_i)} f_{\thg} (y) dy \stackrel{a.s}{\to} \frac{f_{\thg}(\xi_i)}{g(\xi_i)},\quad
 \frac{1}{P_{\thg}(B_n(\xi_i))} \int_{B_n(\xi_i)} \nabla f_{\thg} (y) dy \stackrel{a.s}{\to} \frac{\nabla f_{\thg}(\xi_i)}{f_{\thg}(\xi_i)}, \]
it follows that $\tilde{\psi}_n(\xi_i,\thg)$ is obtained from $\psig$ by substituting the integral quantities above with their almost sure limits.
Let $\la_n(\thg)= E[ \psig ]$ and $\la(\thg)=E[{\psi}(\xi_i,\thg)]$.
\subsection{Idea for proving asymptotic normality of $\hat{\thg}_n$}
Let $\thon$ denote the point maximizing the expectation function $E[h(z_{i,n}(\thg))]$, thus it satisfies $\la_n(\thon)=0$. Recall that $\tho$ satisfies $\la(\tho)=0$.
Consistency of $\hat{\thg}_n$ implies that a sequence $\{ \delta_n \}_1^{\infty}$ can be chosen so that $\delta_n \to 0$ slowly enough to ensure
\[ P(|\hat{\thg}_n-\theta_0|\ge \delta_n)\to 0 \quad \mbox{as} \quad n\to \infty. \]
We will show that $\la_n(\thg)$ converges uniformly to $\la(\thg)$ in a neighbourhood of $\tho$, thus $\thon \to \tho$ as $n \to \infty$.
Therefore, we will consider shrinking neighbourhoods $\Theta_n=\{ \theta: \, |\theta-\theta_0|\le \alpha_n \}$, where $\delta_n<\alpha_n$ and $\alpha_n\to 0$ is such, that $\thon \in \Thn$ for every $n$.
The key steps for proving asymptotic normality of $\hat{\thg}_n$ are the following.
\newline \newline
\noindent \textbf{Step 1}. First we will show that $$\frac{1}{\sqrt{n}} \sum_{i=1}^n \tilde{\psi}_n(\xi_i,\tho)$$ is asymptotically normally distributed. To prove asymptotic normality of this quantity, we will interpret it as a function of a weighted empirical process which converges weakly to a Gaussian process.
\newline \newline
\noindent \textbf{Step 2}. We will prove that
\[ \frac{1}{\sqrt{n}} \sum_{i=1}^n \tilde{\psi}_n(\xi_i,\tho) - \frac{1}{\sqrt{n}} \sum_{i=1}^n \psion \stackrel{p}{\to} 0. \]
Thus, $\frac{1}{\sqrt{n}} \sum_i \psion$ has the same asymptotic distribution as $\frac{1}{\sqrt{n}} \sum_i \tilde{\psi}_n(\xi_i,\tho)$.
\newline \newline
\noindent \textbf{Step 3}. To finalize proving asymptotic normality of $\hat{\thg}_n$, we will follow the approach by \citet{huber67} and show that
\begin{equation} \label{HuberConv}
\frac{1}{\sqrt{n}} \sum_{i=1}^n \psion +\sqrt{n}\la_n(\hat{\thg}_n) \stackrel{p}{\to} 0.
\end{equation}
Expanding $\la_n(\hat{\thg}_n)$ around $\thon$ then gives that $\sqrt{n}(\hat{\thg}_n-\thon)$ is asymptotically normally distributed.
\newline \newline
\noindent Crucial for proving (\ref{HuberConv}) is Lemma 3 in \citet{huber67} stating that
\begin{equation} \label{HuberL3}
 \sup_{\Thn} \frac{\left|\juur \sum_i(\psig-\psion)-\sqrt{n}\lambda_n(\thg)\right| }{1+\sqrt{n}|\lambda_n(\thg)| } \stackrel{p}{\to} 0.
\end{equation}
Since our assumptions imply that $\la_n(\thg)$ is continuously differentiable in a neighbourhood of $\tho$ with a negative definite derivative matrix $V_n(\thg)$, there exists $C>0$ such that
$ |\la_n (\thg)| \ge C|\thg -\thon| $ when $n$ is large enough.
Thus, the convergence in (\ref{HuberL3}) follows if
\[  \sup_{\Thn} \frac{\left|\juur \sum_i(\psig-\psion)-\sqrt{n}\lambda_n(\thg)\right| }{1+\sqrt{n}|\thg-\thon| } \stackrel{p}{\to} 0 \]
holds. Therefore, we will work with expressions having $1+\sqrt{n}|\thg-\thon|$ in the denominator.

Convergence of the weighted empirical process in Step 1 is used to prove asymptotic normality of general functions of the process. Both convergences will be proved in Section 2. In Section 3, this result will be applied for proving asymptotic normality of the approximation of our function of interest, that is $\frac{1}{\sqrt{n}} \sum_{i=1}^n \tilde{\psi}_n(\xi_i,\theta_0)$. Step 2 will also be proved in Section 3. Section 4 deals with Step 3: bracketing technique and a stochastic differentiability condition will be used to prove asymptotic normality of $\hat{\thg}_n$.
\subsection{Assumptions}
We will work with first and second order derivatives of different functions with respect to the components $\theta_j$, $j=1,\ldots,q$,  of the parameter vector $\theta$.
The notations $\partial f_{\thg}(\xi)$ and $\partial^2 f_{\thg}(\xi)$ will be used instead of $\frac{\partial f_{\thg}(\xi)}{\partial \thg_j}$ and $\frac{\partial^2 f_{\thg}(\xi)}{\partial \thg_j \partial \thg_l}$, respectively, when the computations are analogous or a certain condition has to hold independently of $j,l=1,\ldots,q$. Let $\Theta_0$ denote a neighbourhood of $\theta_0$.
Asymptotic normality of $\hat{\theta}_n$ will be proved under different combinations (depending on the function $h$) of the following assumptions:
\begin{itemize}
\item[A1] $\{x\in\mathbb{R}^d: \, g(x)>0\}$ is an open set in $\mathbb{R}^d$, $g$ is uniformly bounded and continuous on $\{ g>0\}$
\item[A2] Assume that $\theta_0$ is an interior point of $\Theta$. Suppose $f_{\thg}(x)$ and its first and second order derivatives with respect to the components of $\thg$ are continuous in $\thg \in \Theta_0$ and in $x$.
\item[A3] \[ E\left[ \sup_{\Theta_0} \left| \frac{\partial f_{\thg}(\xi)}{g(\xi)} \right|^2 \right] < \infty \]
\item[A4] The following random variables are uniformly integrable:
 \[  \sup_{\Theta_0}  \frac{1}{P_{\thg}(B_n(\xi))} \int_{B_n(\xi)}|\partial f_{\thg}(y)|dy  \]
\item[A5] The following random variables are uniformly integrable:
 \[  \left(\frac{1}{P_{\thon}(B_n(\xi))} \int_{B_n(\xi)}|\partial f_{\thon}(y)|dy \right)^2  \]
\item[A6] The following random variables are uniformly integrable:
 \[  \sup_{\Theta_0} \left( \frac{1}{P_{\thg}(B_n(\xi))} \int_{B_n(\xi)}|\partial f_{\thg}(y)|dy \right)^2 \]
\item[A7] \[ E\left[ \sup_{\Theta_0} \left| \frac{\partial f_{\thg}(\xi)}{g(\xi)} \right|^4 \right] < \infty , \quad
             E\left[ \sup_{\Theta_0} \left| \frac{f_{\thg}(\xi)}{g(\xi)} \right|^4 \right] < \infty   \]
\item[A8] The following random variables are uniformly integrable:
\[  \sup_{\Theta_0}  \frac{1}{P_{\thg}(B_n(\xi))} \int_{B_n(\xi)} |{\partial^2 f_{\thg}(y)}|dy   \]
\item[A9] For some $\varepsilon$, $0<\varepsilon<1$,
 \[ E\left[ \sup_{\Theta_0} \left| \frac{\partial^2 f_{\thg}(\xi)}{g(\xi)} \right|^{1+\varepsilon} \right] < \infty \]
\item[A10] \[ E\left[ \sup_{\Theta_0} \left| \frac{\partial^2 f_{\thg}(\xi)}{g(\xi)} \right|^{2} \right] < \infty \]
\end{itemize}
Let $q(x)\le \inf_{\Theta_0}f_{\theta}(x)$ be the Radon-Nikodym derivative of a finite measure $Q$ with respect to the Lebesgue measure on $\mathbb{R}^d$. The following two remarks give two examples of conditions when A4 and A5 are satisfied.
\begin{remark}
Suppose
\[E_q \left( \sup_{\Theta_0} \frac{|\partial f_{\thg}(\xi)|}{q(\xi)} \right)^2<\infty, \quad E_g\left( \frac{g(\xi)}{q(\xi)} \right)<\infty,\]
then A4 holds. If we instead assume $E_q \left( \sup_{\Theta_0} \frac{|\partial f_{\thg}(\xi)|}{q(\xi)} \right)^4<\infty$, then A6 holds.
\end{remark}
\begin{remark}
If for some constants $c_1$, $c_2$ and $\forall x\in\mathbb{R}^d$, $\forall \theta \in {\Theta_0}$,
\[ c_1g(x) \le f_{\theta}(x) \le c_2g(x), \]
then A4 and A5 follow from A3.
\end{remark}
\section{Asymptotic normality of weighted empirical processes}
In this section we will modify the results of \citet{schill83} and prove that a weighted empirical process of $z_{1,n}(\theta_0),\ldots,z_{n,n}(\theta_0)$ converges to a Gaussian process. Using a suitable transformation we then obtain asymptotic normality of
\[ \frac{1}{\sqrt{n}} \sum_{i=1}^n q(z_{i,n}(\theta_0)) u(\xi_i),  \]
where $q(\cdot)$ is a function of bounded variation on $[0,A]$ for any $A>0$, and $u(\cdot)$ is a continuous weight function with the properties $E u(\xi_i)=0$, $Eu^2(\xi_i)<\infty$.
To prove asymptotic normality of the sum above, we use results from \citet{bibr}, \citet{schill83} and \citet{ZhouJam}.
Let
\[ W_{i,n}=ng(\xi_i)V(R_i), \quad \quad i=1,\ldots,n,\]
where $V(r)$ represents the volume of a $d$-dimensional sphere of radius $r$. In \citet{bibr}, it is shown that the normalized (centered and scaled) empirical distribution function of $e^{-W_{1,n}},\ldots,e^{-W_{n,n}}$ converges under the true distribution weakly to a Gaussian process with mean zero and covariance function independent of the true underlying density.  In \citet{schill83}, the same result is proved for a weighted empirical process with a bounded continuous weight function. To be able to use Theorem 2.2 from \citet{schill83}, we will study truncated weight functions $u_N(\cdot)$ defined as follows.
Since
$$Eu(\xi)=\int u^+(x) dP_0(x) - \int u^-(x) dP_0(x)=0,$$ we can find for every $N>0$ a constant $N^{\ast}>0$ such that
\[ \int_{\{ u^+(x)\le N \}} u^+(x) dP_0(x) = \int_{\{ u^-(x)\le N^{\ast} \}} u^-(x) dP_0(x). \]
Define a bounded weight function $u_N(x)$ as follows:
\begin{equation} \label{weightfnN} u_N(x) =\left\{ \begin{array}{ll}
                u(x),  & \quad -N^{\ast} \leq u(x) \leq N  , \\
                0, & \quad otherwise.
               \end{array}\right.
\end{equation}
Take $N_1=\max \{N,N^{\ast} \}$, then $Eu_N(\xi)=0$ and $|u_N(\xi)|\le N_1$.

The general ideology for proving the convergence
\[ \frac{1}{\sqrt{n}}\sum_{i=1}^n q(z_{i,n}(\theta_0)) u(\xi_i) \stackrel{D}{\to} \mathcal{N}(0,\sigma_q^2\tau^2), \]
where $\tau^2=E[u^2(\xi_i)]$, will be as follows. We consider bounded weight functions $u_N(x)$ defined as in $(\ref{weightfnN})$ and define the weighted empirical processes $\{Y_n(t): \, 0\le t\le \infty\}$ and
$\{Z_n(t):\, 0\le t\le \infty\}$:
\begin{equation} \label{procZn} Z_n(t)= \frac{1}{\sqrt{n}}\sum_{i=1}^n I(z_{i,n}(\theta_0) > t)u_N(\xi_i), \end{equation}
\[ Y_n(t)=\frac{1}{\sqrt{n}}\sum_{i=1}^n \left\{I(W_{i,n} > t)u_N(\xi_i)-E\left[ I(W_{i,n} > t)u_N(\xi_i) \right] \right\}.  \]
From \citet{schill83} it follows that $\{Y_n(t)\}$ converges weakly to a Gaussian process. For large $n$ we have $ng(\xi_i)V(R_i)=W_{i,n}\approx nP_0(B_n(\xi_i))=z_{i,n}(\theta_0)$. Thus, if we can show that $\{ Z_n(t)\}$ is tight and $\mbox{Var}(Z_n(t)-Y_n(t))\to 0$ for every $t$, then $\{Z_n(t)\}$ converges to the same Gaussian process. Therefore, using the results from
\citet{bibr}, \citet{schill83} and \citet{ZhouJam}, we can show that
\[ Z_n(t) \stackrel{D}{\to} Z(t), \]
where $\{Z(t): \, 0\le t\le \infty\}$ is a Gaussian process with mean zero and with a certain covariance function. We then apply the integral transform
\[ s(x)=q(0)x(0) + \int_0^A x(t) dq(t) \]
to $Z_n(t)$ and obtain via $s(Z_n(t))\stackrel{D}{\to} s(Z(t))$ the desired result.
\begin{proposition} \label{GaussProc}
Suppose A1 and the following conditions hold:
$$|u_N(\xi_i)|\le N_1 \,\, \mbox{for some} \,\, N_1>0, \quad E u_N(\xi_i)=0, \quad E u^2_N(\xi_i)=\tau^2_N. $$
Then $\{Z_n(t): \, 0\le t \le \infty\}$ defined in (\ref{procZn}) converges weakly to a Gaussian process $\{ Z(t): \, 0\le t \le \infty\}$ with mean zero and covariance function
$\tau_N^2 k(s,t)$, where
$k(s,t)$ is given by
\[ k(s,t)= e^{-t} -te^{-s-t} +e^{-s-t}\int_{W(s,t)} (e^{\beta(s,t,x)}-1)dx, \quad 0\leq s \leq t \leq \infty, \]
where
\[ W(s,t)=\{ x\in \mathbb{R}^d: r_1\leq |x|\leq r_1+r_2 \}, \quad
\beta(s,t,x)=\int_{B(0,r_1)\cap B(x,r_2)} dz,\]
with $r_1$ and $r_2$ corresponding to the volumes $t$ and $s$ of the balls $B(0,r_1)$ and $B(0,r_2)$, respectively.
\end{proposition}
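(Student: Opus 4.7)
The plan is to follow the three-stage strategy sketched in the paper: first obtain weak convergence of the auxiliary process $Y_n$ built on $W_{i,n}=ng(\xi_i)V(R_i)$ via Schilling's Theorem 2.2, then transfer the conclusion to $Z_n$ via a tightness plus variance-match argument, and finally read off the limiting covariance from Bickel--Breiman. For Stage~1, the hypotheses $|u_N(\xi_i)|\le N_1$, $Eu_N(\xi_i)=0$, $Eu_N^2(\xi_i)=\tau_N^2$, together with A1, are exactly what Theorem~2.2 of \citet{schill83} requires, so $Y_n \Rightarrow Z$ on $[0,\infty]$ with $Z$ centered Gaussian and covariance $\tau_N^2 k(s,t)$. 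The factor $\tau_N^2$ pops out because, conditional on the point configuration, the weights $u_N(\xi_i)$ are i.i.d., $Eu_N=0$, and cross-index contributions vanish in the limit just as in the unweighted Bickel--Breiman case.

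For Stage~2 (identifying $k(s,t)$), the computation reduces to evaluating the limit of $n(n-1)\,\mathrm{Cov}(I(W_{1,n}>s),I(W_{2,n}>t))$ minus the diagonal. Using the nearest-neighbour characterization, the event $\{W_{i,n}>s,W_{j,n}>t\}$ corresponds geometrically to having no other sample points in certain balls around $\xi_i,\xi_j$, and a Poissonization argument (standard in \citet{bibr}) gives $e^{-s-t}$ corrected by the overlap integral over $W(s,t)$, yielding precisely the stated formula for $k(s,t)$.

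Stage~3 is the actual work specific to this proposition. To pass from $Y_n \Rightarrow Z$ to $Z_n \Rightarrow Z$, I would prove two things. First, the finite-dimensional distributions of $Z_n-Y_n$ converge to $0$ in probability. Since $u_N$ is bounded by $N_1$, it suffices to show $\mathrm{Var}(Z_n(t)-Y_n(t))\to 0$ and $E[Z_n(t)-Y_n(t)]\to 0$ for each $t$; this reduces, via A1 and the continuity of $g$ at $\xi_i$ together with $R_n(i)\to 0$ a.s., to showing that $I(z_{i,n}(\theta_0)>t)-I(W_{i,n}>t)\to 0$ in probability along with uniform integrability of the cross-index indicator products, which follows the same pairwise local-geometry estimates used by \citet{bibr} and \citet{ZhouJam}. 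Second, the process $Z_n-Y_n$ must be tight on $[0,\infty]$, for which I would apply the Billingsley-type moment criterion to increments $Z_n(t_2)-Z_n(t_1)-(Y_n(t_2)-Y_n(t_1))$, controlling them by the same covariance calculations as in Stage~2 with the boundedness $|u_N|\le N_1$ absorbing the weights.

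The main obstacle will be Stage~3: the indicators $I(z_{i,n}(\theta_0)>t)$ are not independent (they depend on the full sample through the nearest-neighbour construction), so the usual empirical-process tightness arguments do not apply directly. The analogous difficulty was handled in \citet{ZhouJam} by exploiting the local, almost-independent structure of nearest-neighbour statistics, and I would adapt those estimates to the weighted setting here. Once the moment bound on increments of $Z_n-Y_n$ is in hand, tightness of $Z_n$ follows from tightness of $Y_n$ (given by Schilling), and together with the fidi convergence this concludes $Z_n \Rightarrow Z$ with the claimed covariance $\tau_N^2 k(s,t)$.
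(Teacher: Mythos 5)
Your overall strategy coincides with the paper's: cite \citet{schill83} for $Y_n \Rightarrow Z$, transfer the finite-dimensional limits to $Z_n$ by showing $\mathrm{Var}(Z_n(t)-Y_n(t))\to 0$ using the conditional nearest-neighbour geometry of \citet{ZhouJam} together with Lemma 2.11 of \citet{bibr}, and then supply tightness. Two points of divergence. First, your Stage~2 is superfluous: the paper does not recompute $k(s,t)$ by Poissonization; the covariance is simply inherited from Schilling's limit for $Y_n$, which is the whole point of the transfer. Second, and more substantively, the tightness step is handled differently. You propose to prove tightness of the \emph{difference} process $Z_n-Y_n$ via a moment criterion on its increments; the paper instead proves tightness of $Z_n$ directly. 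It splits $Z_n=Z_n^+-Z_n^-$ according to the positive and negative parts of $u_N$ (so the summands are monotone in the indicator), computes $El_1$ and $E[l_1 P_0^2(B_n(\xi_1))]$ for $l_i=I(a\le z_{i,n}(\theta_0)\le b)$ explicitly, and invokes Theorem 2.1 of \citet{bibr} to get $E\left[\sum_i(l_i-El_i)\right]^4 \le M\left[n^2(Q(b)-Q(a))^2+n\right]$ with the fixed continuous distribution function $Q(t)=1-\tfrac{1}{3}e^{-t}(t^2+2t+3)$, which is exactly the form the standard tightness criterion needs. Your route is not wrong in principle (tightness of $Y_n$ plus tightness of $Z_n-Y_n$ yields tightness of $Z_n$), but it requires controlling the joint level-crossing behaviour of $z_{i,n}$ and $W_{i,n}$ over an interval, which is strictly more delicate than the paper's fourth-moment bound on $Z_n$ alone; and in either route you still need the positive/negative-part splitting of $u_N$, which your write-up omits, to make the Bickel--Breiman moment machinery applicable.
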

\begin{proof} From \citet{schill83} it follows directly that the centered empirical process $\{Y_n(t)\}$ converges weakly to the Gaussian process defined above. To conclude that
$\{Z_n(t)\}$ converges to the same limit, we prove that for every $t$, $\mbox{Var}(Z_n(t)-Y_n(t))\to 0$, and that $\{ Z_n(t)\}$ is tight.

a) That $\mbox{Var}(Z_n(t)-Y_n(t))\to 0$ as $n\to \infty$, follows with minor modifications from \citet{ZhouJam}.
Since
\[\mbox{Var}[Z_n(t)-Y_n(t)]\le E\left[ \left( I(z_{1,n}(\theta_0)>t)-I(W_{1,n}>t) \right)u_N(\xi_1) \right]^2 \]
\[ + n\left|\mbox{Cov}\left[ \left( I(z_{1,n}(\theta_0)>t)-I(W_{1,n}>t) \right)u_N(\xi_1), \left( I(z_{2,n}(\theta_0)>t)-I(W_{2,n}>t) \right)u_N(\xi_2)    \right]\right|, \]
we need to show that
\[ \lim_{n\to \infty} E \left[  I(z_{1,n}(\theta_0)>t)-I(W_{1,n}>t)  \right]^2 =0, \]
\[ \lim_{n\to \infty} n\left|\mbox{Cov}\left[ \left( I(z_{1,n}(\theta_0)>t)-I(W_{1,n}>t) \right)u_N(\xi_1), \left( I(z_{2,n}(\theta_0)>t)-I(W_{2,n}>t) \right)u_N(\xi_2)    \right]\right|=0.\]
The convergence of both terms follows as in the proof of Proposition 1 of \citet{ZhouJam}. For the proof of the convergence of the covariance term, Lemma 2.11 in \citet{bibr} is fundamental.

b) Tightness of $\{Z_n(t)\}$ can be proved similarly to \citet{schill83} and \citet{bibr}. As in \citet{schill83}, we can split $Z_n(t)$ as follows: $Z_n(t)=Z_n^+(t)-Z_n^-(t)$, where
\[ Z_n^{\pm}(t) = \frac{1}{\sqrt{n}} \sum_{i=1}^n \left\{I(z_{i,n}(\theta_0)>t)u_N^{\pm}(\xi_i)- E\left[ I(z_{i,n}(\theta_0)>t)u_N^{\pm}(\xi_i) \right] \right\}. \]
It is enough to show that $\{Z_n^+(t)\}$ is tight.
Let $l_i=I(a\le z_{i,n}(\theta_0)\le b)$, then
\[ El_1 = E\left[ I \left({a}\le n P_0(B_n(\xi_1))\le {b} \right)\right] =\frac{n-1}{n} \int_a^b \left( 1-\frac{w}{n} \right)^{n-2} dw  \]
\[ <e^2 \int_a^b e^{-w} dw =e^2(e^{-a}-e^{-b}),\]
\[ E\left[ l_1 P_0^2(B_n(\xi_1)) \right] =\frac{1}{n^2}\int_a^b  \frac{n-1}{n} w^2 \left( 1-\frac{w}{n} \right)^{n-2} dw \]
\[< \frac{e^2}{n^2} \left( e^{-a}(a^2+2a+2)-e^{-b}(b^2+2b+2) \right). \]
Thus, applying Theorem 2.1 in \citet{bibr} gives that for some constant $M>0$,
\[ E\left[\sum_{i=1}^n (l_i-El_i)\right]^4 < M\left[ n^2(Q(b)-Q(a))^2+n \right],  \]
where $Q(t)$ is a continuous distribution function defined as
\[ Q(t) = 1-\frac{1}{3}e^{-t} (t^2 + 2t +3). \]
The rest of the proof goes according to \citet{schill83} and \citet{bibr}.
\end{proof}
\begin{proposition} \label{normconvN}
Suppose the assumptions of Proposition \ref{GaussProc} hold and $q(t)$ is of bounded variation on $[0,A]$ for each $A>0$. Let $|\int_1^\infty (te^{-t})^{1/2} dq(t)|<\infty$. Then
\[ \frac{1}{\sqrt{n}} \sum_{i=1}^n q(z_{i,n}(\theta_0))u_N(\xi_i) \to \mathcal{N}(0,\tau_N^2 \sigma_q^2),\]
where
\begin{equation} \label{CovConstsigq}
\sigma_q^2= q^2(0)+\int_0^{\infty}\int_0^{\infty} k(s,t) dq(s)dq(t)+2q(0) \int_0^{\infty} k(0,t)dq(t)
\end{equation}
with $k(0,t)=e^{-t}-te^{-t}$.
\end{proposition}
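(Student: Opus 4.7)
The plan is to express the statistic $T_n := \frac{1}{\sqrt{n}} \sum_{i=1}^n q(z_{i,n}(\theta_0)) u_N(\xi_i)$ as a continuous linear functional of the weighted empirical process $Z_n$ from Proposition \ref{GaussProc}, and then to conclude by the continuous mapping theorem. Since $z_{i,n}(\theta_0) > 0$ almost surely, the fundamental theorem for functions of bounded variation gives
\[ q(z_{i,n}(\theta_0)) = q(0) + \int_0^\infty I(z_{i,n}(\theta_0) > t)\,dq(t), \]
so that, after multiplying by $u_N(\xi_i)/\sqrt{n}$, summing in $i$, and invoking Fubini,
\[ T_n = q(0) Z_n(0) + \int_0^\infty Z_n(t)\,dq(t) =: s(Z_n). \]

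To exploit Proposition \ref{GaussProc}, I would first truncate at a finite level $A$ via $s_A(x) = q(0) x(0) + \int_0^A x(t)\,dq(t)$. Since $q$ has bounded variation on $[0,A]$, $s_A$ is a continuous linear functional on $D[0,A]$ equipped with the sup-norm topology, and Proposition \ref{GaussProc} combined with the continuous mapping theorem yield $s_A(Z_n) \stackrel{D}{\to} s_A(Z)$. Because $Z$ is a mean-zero Gaussian process with covariance $\tau_N^2 k(\cdot,\cdot)$, the image $s_A(Z)$ is mean-zero Gaussian with variance
\[ \tau_N^2 \left[ q^2(0) + \int_0^A \int_0^A k(s,t)\,dq(s)\,dq(t) + 2q(0) \int_0^A k(0,t)\,dq(t) \right]. \]

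The main obstacle is controlling the tail $\int_A^\infty Z_n(t)\,dq(t)$ uniformly in $n$ as $A \to \infty$. The key input is that $z_{i,n}(\theta_0) = n P_0(B_n(\xi_i))$ is asymptotically unit exponential; together with the covariance computations already carried out for Proposition \ref{GaussProc}, this yields a bound of the form $E Z_n^2(t) \leq C\,t e^{-t}$, matching the behaviour of $k(0,t) = e^{-t}(1-t)$ near infinity. Combined with the hypothesis $|\int_1^\infty (te^{-t})^{1/2}\,dq(t)| < \infty$, the Cauchy--Schwarz bound
\[ E\left| \int_A^\infty Z_n(t)\,dq(t) \right| \le \int_A^\infty (te^{-t})^{1/2}\,|dq|(t) \]
tends to $0$ as $A \to \infty$, uniformly in $n$, and the analogous bound applies to the limit process $Z$. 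A standard three--$\varepsilon$ argument then upgrades $s_A(Z_n) \stackrel{D}{\to} s_A(Z)$ to $T_n = s(Z_n) \stackrel{D}{\to} s(Z)$, and letting $A \to \infty$ in the variance expression identifies the limit as $\mathcal{N}(0, \tau_N^2 \sigma_q^2)$ with $\sigma_q^2$ given by (\ref{CovConstsigq}).
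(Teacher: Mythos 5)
Your proposal is correct and follows essentially the same route as the paper: represent the statistic as $q(0)Z_n(0)+\int_0^\infty Z_n(t)\,dq(t)$, apply Proposition \ref{GaussProc} on $[0,A]$, control the tail $\int_A^\infty Z_n(t)\,dq(t)$ uniformly in $n$ via the second-moment bound $\mathrm{Var}[Z_n(t)]\le Mte^{-t}$ together with the integrability hypothesis, and pass to the limit with Billingsley's Theorem 4.2. The only cosmetic difference is that you bound the tail in $L^1$ via Cauchy--Schwarz where the paper uses Chebyshev on the variance; both rest on the same moment estimate.
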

\begin{proof} Recall the definition of the empirical process $Z_n(t)$ in (\ref{procZn}).
In the proof of Proposition 2 in \citet{ZhouJam} it is shown that for some $M>0$,
$\mbox{Var}[Z_n(t)]\le Mt e^{-t}$, $t\ge 1$. Therefore it follows according to our assumption that
$\int_0^{\infty}\left( \mbox{Var}[Z_n(t)] \right)^{1/2}dq(t) < \infty$, which implies (see e.g.~\citet{CramLead67}, p.~90-91) that
for every $n$,
\[ \int_0^A Z_n(t) dq(t) \stackrel{A\to \infty}{\to} \int_0^{\infty} Z_n(t) dq(t). \]
Analogously,
\[ \int_0^A Z(t) dq(t) \stackrel{A\to \infty}{\to} \int_0^{\infty} Z(t) dq(t). \]
Therefore, $\int_0^{\infty} Z_n(t) dq(t)$ is well defined and it holds with probability one that
\[ \frac{1}{\sqrt{n}}\sum_{i=1}^n q(z_{i,n}(\theta_0))u_N(\xi_i)= - \int_0^{\infty} q(t) d Z_n(t)
 =q(0)Z_n(0) + \int_0^\infty Z_n(t) d q(t).\]
Since $q(t)$ is of bounded variation on $[0,A]$ for every $A>0$, it follows from Proposition $\ref{GaussProc}$ that
\[  - \int_0^{A} q(t) d Z_n(t) \stackrel{D}{\to} q(0)Z(0) + \int_0^A Z(t) d q(t). \]
Because
\[ \lim_{A\to \infty} \lim_{n\to \infty} P\left(\left|\int_0^{\infty} Z_n(t) dq(t)-\int_0^A Z_n(t) dq(t)\right|>\varepsilon \right) \]
\[ \le \lim_{A\to \infty} \frac{1}{\varepsilon^2} \mbox{Var}\left( \int_A^{\infty} Z_n(t)dq(t) \right)\le \lim_{A\to \infty} \frac{1}{\varepsilon^2}M \left( \int_A^{\infty}(te^{-t})^{1/2}dq(t) \right)^2 =0, \]
it follows according to Theorem 4.2 in \citet{billingsley68} that
\begin{equation} \label{LimVariable}
q(0)Z_n(0) + \int_0^\infty Z_n(t) d q(t) \stackrel{D}{\to} q(0)Z(0) + \int_0^\infty Z(t) d q(t).  \end{equation}
The variance of the limiting distribution can now be calculated using the covariance function of $Z(t)$. That the random variable on the right hand side of (\ref{LimVariable}) is normally distributed, follows since it is an integral of a normal process.
\end{proof}
\begin{proposition} \label{normconv}
Assume the assumptions of Proposition \ref{normconvN} are valid. Substitute the truncated function $u_N(\xi_i)$ with $u(\xi_i)$ and suppose $\tau^2=Eu^2(\xi_i)<\infty$. Then
\begin{equation} \label{GeneralConv}
 \frac{1}{\sqrt{n}} \sum_{i=1}^n q(z_{i,n}(\theta_0))u(\xi_i) \stackrel{D}{\to} \mathcal{N}(0,\tau^2 \sigma_q^2).\end{equation}
\end{proposition}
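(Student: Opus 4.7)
The strategy is a truncation argument that reduces the unbounded-weight case to the bounded case of Proposition \ref{normconvN}, glued together by Theorem 4.2 of \citet{billingsley68}. Write
\[ S_n = \frac{1}{\sqrt{n}} \sum_{i=1}^n q(z_{i,n}(\theta_0)) u(\xi_i), \qquad S_{n,N} = \frac{1}{\sqrt{n}} \sum_{i=1}^n q(z_{i,n}(\theta_0)) u_N(\xi_i). \]
First I would check that the companion threshold $N^*$ in (\ref{weightfnN}) tends to $\infty$ as $N \to \infty$: this is forced by the defining equality of $N^*$, since the left-hand side increases to $E u^+ = E u^- < \infty$ (the equality holds because $Eu = 0$, and finiteness follows from $Eu^2 < \infty$). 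Consequently $u_N(\xi_i) \to u(\xi_i)$ pointwise; since $|u_N| \le |u| \in L^2$, dominated convergence gives $\tau_N^2 \to \tau^2$.

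The three ingredients required by the theorem are: (i) for each fixed $N$, $S_{n,N} \stackrel{D}{\to} Y_N \sim \mathcal{N}(0, \tau_N^2 \sigma_q^2)$, which is Proposition \ref{normconvN}; (ii) $Y_N \stackrel{D}{\to} Y \sim \mathcal{N}(0, \tau^2 \sigma_q^2)$ as $N \to \infty$, immediate from $\tau_N^2 \to \tau^2$; (iii) the negligibility condition
\[ \lim_{N \to \infty} \limsup_{n \to \infty} P(|S_n - S_{n,N}| > \varepsilon) = 0 \quad \mbox{for every} \quad \varepsilon > 0. \]
Together these three imply $S_n \stackrel{D}{\to} Y$, which is (\ref{GeneralConv}).

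The main obstacle is (iii). Setting $v_N = u - u_N$, which is centred with $Ev_N^2 \to 0$, Chebyshev reduces the task to showing
\[ \limsup_{n \to \infty} E\left[\left(\frac{1}{\sqrt{n}} \sum_{i=1}^n q(z_{i,n}(\theta_0)) v_N(\xi_i)\right)^2\right] \le C\,\sigma_q^2\, E v_N^2 \]
for some constant $C$ independent of $N$. Since $v_N$ is \emph{unbounded}, Proposition \ref{normconvN} does not apply directly; I would instead use a secondary bounded truncation at level $M > N$. For the bounded centred weight $u_M - u_N$, Proposition \ref{normconvN} yields $\limsup_n \mbox{Var}(S_{n,M} - S_{n,N}) = E(u_M - u_N)^2 \sigma_q^2 \le Ev_N^2 \sigma_q^2$, while the tail piece $S_n - S_{n,M}$ is controlled by adapting the indicator second-moment calculations from the proof of Proposition \ref{GaussProc} and Lemma 2.11 of \citet{bibr} to $L^2$ weights, exploiting that $z_{i,n}(\theta_0)$ is asymptotically $\mbox{Exp}(1)$-distributed conditional on $\xi_i$, so $E[q^2(z_{i,n}(\theta_0)) \mid \xi_i]$ stays bounded. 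Sending $n \to \infty$ first, then $M \to \infty$, and finally $N \to \infty$ yields (iii) and completes the proof of (\ref{GeneralConv}).
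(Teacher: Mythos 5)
Your skeleton --- truncate the weight, invoke Proposition \ref{normconvN} for each fixed $N$, note $\tau_N^2\to\tau^2$, and close the triangle with Theorem 4.2 of \citet{billingsley68} via a Chebyshev bound on the second moment of $S_n-S_{n,N}$ --- is exactly the paper's. The gap is in the only genuinely hard step, namely showing that
\[
\limsup_{n} E\Bigl[\Bigl(\tfrac{1}{\sqrt n}\textstyle\sum_i q(z_{i,n}(\theta_0))\Delta_N(\xi_i)\Bigr)^2\Bigr]\to 0 \quad\text{as } N\to\infty, \qquad \Delta_N=u-u_N .
\]
Expanding the square gives a diagonal term plus $(n-1)$ times a single covariance term, and the whole difficulty is beating that factor $n-1$ despite the local dependence between nearest neighbour balls. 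Your proposal defers precisely this to ``adapting the indicator second-moment calculations to $L^2$ weights,'' and the intermediate truncation at level $M$ buys nothing: the tail piece $S_n-S_{n,M}$ still carries an unbounded weight $u-u_M$ and poses exactly the same problem as $S_n-S_{n,N}$ itself. (A secondary slip: $\limsup_n\mathrm{Var}(S_{n,M}-S_{n,N})=E(u_M-u_N)^2\sigma_q^2$ does not follow from Proposition \ref{normconvN}, which gives convergence in distribution, not of second moments; you would have to argue via the portmanteau bound $\limsup_n P(|S_{n,M}-S_{n,N}|\ge\varepsilon)\le P(|\mathcal N(0,v)|\ge\varepsilon)$ instead.)

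The missing idea is the conditional decomposition by nearest-neighbour geometry of \citet{Schilling86}, which the paper applies directly to $\Delta_N$. Condition the covariance term on the five mutually exclusive configurations $D_1,\dots,D_5$ of the pair $(\xi_1,\xi_2)$ (mutual nearest neighbours, shared nearest neighbour, one-sided, and the generic case). Given $D_5$ the two summands are independent, so that contribution vanishes; for $i=1,\dots,4$ one has $P(D_i)=\mathcal O(1/n)$, which cancels the factor $n-1$, and each conditional expectation is bounded by $E[q^2(z_1)\Delta_N^2(\xi_1)]=E[q^2(z_1)]\,E[\Delta_N^2(\xi_1)]$ (using that $z_{1,n}(\theta_0)$ is independent of $\xi_1$), which tends to $0$ as $N\to\infty$ uniformly in $n$. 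With this device the single truncation at $N$ suffices and the proof closes.
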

\begin{proof}
Let $\Delta_N(\xi_i)=u(\xi_i)-u_N(\xi_i)$ and $z_i=z_{i,n}(\theta_0)$. Then
\[ E\left[ \frac{1}{\sqrt{n}} \sum_{i=1}^n q(z_i)\Delta_N(\xi_i) \right]^2 = E[q^2(z_1)\Delta_N^2(\xi_1)]
 +(n-1)E\left[ q(z_1)\Delta_N(\xi_1) q(z_2)\Delta_N(\xi_2) \right].\]
To prove that the covariance term converges to zero as $N\to \infty$, we use the conditional approach of \citet{Schilling86}. Let $\{ \mbox{NN}_1=\xi_2 \}$ denote the event that the nearest neighbour of $\xi_1$ is $\xi_2$.  Consider the following five mutually exclusive sets for various nearest neighbour geometries of $\xi_1$ and $\xi_2$:
\[ D_1=\{ \mbox{NN}_1=\xi_2, \, \mbox{NN}_2=\xi_1  \},  \,\, D_2=\{ \mbox{NN}_1= \mbox{NN}_2 \}, \,\, D_3=\{ \mbox{NN}_1=\xi_2, \, \mbox{NN}_2 \neq \xi_1  \},  \]
\[ D_4=\{ \mbox{NN}_1 \neq \xi_2, \, \mbox{NN}_2=\xi_1  \}, \, \, D_5=\{ \mbox{NN}_1 \neq \xi_2, \, \mbox{NN}_2 \neq \xi_1, \, \mbox{NN}_1\neq \mbox{NN}_2 \}.\]
Then,
\[ E\left[ q(z_1)\Delta_N(\xi_1) q(z_2)\Delta_N(\xi_2) \right]
 = P(D_5)E[ q(z_1)\Delta_N(\xi_1) q(z_2)\Delta_N(\xi_2) |D_5 ] \]
 \[ + \sum_{i=1}^4 P(D_i) E [ q(z_1)\Delta_N(\xi_1) q(z_2)\Delta_N(\xi_2) |D_i ]. \]
Given $D_5$, we have independence, therefore the covariance is zero. Since for $i=1,\ldots,4$, $P(D_i)=\mathcal{O}(1/n)$, it is sufficient to show that
the conditional expectations tend to zero as $N\to \infty$, $i=1,\ldots,4$. We have
\[ \left| E[q(z_1)\Delta_N(\xi_1) q(z_2)\Delta_N(\xi_2) |D_i] \right| \le E[ q^2(z_1)\Delta_N^2(\xi_1)|D_i ]  \]
\[ =E[q^2(z_1)\Delta_N^2(\xi_1)]= E[q^2(z_1)] E[\Delta_N^2(\xi_1)] \to 0 \quad \mbox{as} \quad N\to \infty. \]
Thus, Theorem 4.2 in \citet{billingsley68} implies (\ref{GeneralConv}).
\end{proof}
\section{Asymptotic normality of the derivative of the GMSP function}
In Proposition \ref{normconv} we proved asymptotic normality for a general function $u(\xi_i)$ satisfying $E u(\xi_i)=0$ and $Eu^2(\xi_i)<\infty$. Since any linear combination of such functions has also expectation zero and a finite second moment, we can use Proposition \ref{normconv} for proving asymptotic normality of our random vector of interest. Let $I(\theta_0)$ denote the Fisher information matrix at $\theta=\theta_0$, that is
$I(\theta_0)$ is the covariance matrix of $\left(\nabla f_{\theta}(\xi_i)\right)_{\theta=\theta_0}/{g(\xi_i)}$.
\begin{proposition} Suppose that $q(t)=h'(t)t$ satisfies the conditions of Proposition \ref{normconvN}. Assume that $E[(\partial f_{\theta}(\xi_i))_{\theta=\theta_0}/g(\xi_i)]^2<\infty$ holds for all the partial derivatives and that the covariance matrix $I(\theta_0)$ is positive definite. Then
\[ \frac{1}{\sqrt{n}} \sum_{i=1}^n \tilde{\psi}_n(\xi_i,\theta_0)=  \frac{1}{\sqrt{n}} \sum_{i=1}^n h'(z_{i,n}(\theta_0))z_{i,n}(\theta_0)
\frac{\left(\nabla f_{\theta}(\xi_i)\right)_{\theta=\theta_0}}{g(\xi_i)}  \]
converges in distribution to a normal distribution with mean zero and with covariance matrix $\sigma_q^2 I(\theta_0)$,
where $\sigma_q^2$ is calculated as in $(\ref{CovConstsigq})$. Observe that $\sigma_q^2$ depends on function $h$ since $q(t)=h'(t)t$.
\end{proposition}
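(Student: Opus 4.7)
My plan is to reduce the multivariate convergence to the scalar convergence already established in Proposition \ref{normconv} via the Cramér--Wold device. Fix an arbitrary vector $c\in\mathbb{R}^q$ and write
\[ c^T \tilde{\psi}_n(\xi_i,\theta_0) = h'(z_{i,n}(\theta_0))\,z_{i,n}(\theta_0)\cdot c^T\frac{(\nabla f_\theta(\xi_i))_{\theta=\theta_0}}{g(\xi_i)} = q(z_{i,n}(\theta_0))\,u(\xi_i), \]
where $q(t)=h'(t)t$ and $u(\xi)=c^T(\nabla f_\theta(\xi))_{\theta=\theta_0}/g(\xi)$. By assumption, $q$ satisfies the conditions of Proposition \ref{normconvN}, so the only remaining task for each fixed $c$ is to verify the two hypotheses on the weight function $u$: namely $E[u(\xi_i)]=0$ and $Eu^2(\xi_i)<\infty$.

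Since the true density $g$ coincides with $f_{\theta_0}$, we have $u(\xi)=c^T\nabla\log f_{\theta_0}(\xi)$, which is the usual score function linearly combined through $c$. The moment condition $E[((\partial f_\theta(\xi))_{\theta=\theta_0}/g(\xi))^2]<\infty$ for each partial derivative immediately yields $Eu^2(\xi_i)=c^T I(\theta_0)c<\infty$. For the mean, I would justify differentiation under the integral sign: the assumptions guarantee that $\int f_\theta(y)\,dy\equiv 1$ can be differentiated componentwise at $\theta=\theta_0$, which gives $\int(\partial f_\theta(y))_{\theta=\theta_0}\,dy=0$, and therefore
\[ E[u(\xi_i)] = \int c^T(\nabla f_\theta(y))_{\theta=\theta_0}\,dy = 0. \]

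With these two properties in hand, Proposition \ref{normconv} applied to $q$ and $u$ gives
\[ \frac{1}{\sqrt{n}}\sum_{i=1}^n c^T\tilde{\psi}_n(\xi_i,\theta_0) \stackrel{D}{\to} \mathcal{N}\bigl(0,\,\sigma_q^2\,c^T I(\theta_0)c\bigr). \]
Since $c\in\mathbb{R}^q$ was arbitrary, the Cramér--Wold device yields the desired joint convergence
\[ \frac{1}{\sqrt{n}}\sum_{i=1}^n \tilde{\psi}_n(\xi_i,\theta_0)\stackrel{D}{\to}\mathcal{N}\bigl(0,\,\sigma_q^2 I(\theta_0)\bigr), \]
with $I(\theta_0)$ positive definite by assumption.

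The argument is essentially a bookkeeping exercise once Proposition \ref{normconv} is available; the only subtle point, and hence the main obstacle, is the interchange of differentiation and integration needed to conclude $E[u(\xi_i)]=0$. This requires the dominated convergence theorem applied to difference quotients of $f_\theta$, which is justified by the continuity of $\partial f_\theta$ in a neighbourhood of $\theta_0$ together with the integrability envelope supplied by the $L^2$ moment bound on $\partial f_\theta/g$ (this provides an $L^1(g)$ bound, hence an $L^1$ bound with respect to Lebesgue measure on $\{g>0\}$). Everything else---identifying $E u^2$ with the quadratic form $c^T I(\theta_0)c$, and the linearity that preserves mean zero under linear combinations---is routine.
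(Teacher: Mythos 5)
Your proposal is correct and follows essentially the same route as the paper: the paper's own proof simply defines $u(\xi_i)=(t_1,\ldots,t_q)\left(\nabla f_{\theta}(\xi_i)\right)_{\theta=\theta_0}/g(\xi_i)$ and invokes Proposition \ref{normconv} together with the Cram\'er--Wold device. Your additional verification that $E[u(\xi_i)]=0$ (via differentiating $\int f_{\theta}\,dy\equiv 1$ under the integral sign) and $Eu^2(\xi_i)=c^TI(\theta_0)c<\infty$ just makes explicit what the paper leaves implicit.
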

\begin{proof} Let $q(z_{i,n}(\theta_0))=h'(z_{i,n}(\theta_0))z_{i,n}(\theta_0)$. Define $u(\xi_i)$ as
\[ u(\xi_i)= (t_1,\ldots,t_q)  \left(\nabla f_{\theta}(\xi_i)\right)_{\theta=\theta_0}/{g(\xi_i)}. \]
The assertion then follows from Proposition \ref{normconv} by using the Cram\'{e}r-Wold device.
\end{proof}
\begin{proposition} \label{ConvDistrProp}
Consider a random vector $(z_{i,n}(\theta_0),X_n^T)^T$ such that $X_n\stackrel{p}{\to}X$ and the components of $X$ are continuous functions of only $\xi_i$. Then
\[  \left(  \begin{array}{c}
        z_{i,n}(\tho) \\
         {X_n} \\
  \end{array}
\right) \stackrel{D}{\to}  \left(  \begin{array}{c}
        Z \\
        X \\
  \end{array}
\right) \quad \mbox{with} \quad Z\sim Exp(1). \]
\end{proposition}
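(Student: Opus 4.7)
My plan is to decouple $z_{i,n}(\theta_0)$ from $\xi_i$ by proving that the conditional distribution of $z_{i,n}(\theta_0)$ given $\xi_i$ converges to $\mathrm{Exp}(1)$ with a limit that is free of $\xi_i$. Asymptotic independence of $(z_{i,n}(\theta_0),\xi_i)$ then follows, and since each component of $X$ is a continuous function of $\xi_i$, the continuous mapping theorem upgrades this to $(z_{i,n}(\theta_0),X)\stackrel{D}{\to}(Z,X)$ with $Z$ independent of $X$. A final application of Slutsky using $X_n-X\stackrel{p}{\to}0$ then yields the claim.

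For the conditional convergence, fix $x$ with $g(x)>0$ and condition on $\xi_i=x$: the remaining $\xi_j$, $j\ne i$, are i.i.d.\ with distribution $P_0=P_{\theta_0}$. The map $\rho(r):=P_{\theta_0}(B(x,r))$ is precisely the c.d.f.\ of $|x-\xi_j|$, and it is continuous since $P_0$ is absolutely continuous (A1). The probability integral transform therefore gives that $V_j:=\rho(|x-\xi_j|)$, $j\ne i$, are i.i.d.\ Uniform$(0,1)$; since $z_{i,n}(\theta_0)=n\rho(R_n(i))=n\min_{j\ne i}V_j$, the standard minimum-of-uniforms calculation yields $z_{i,n}(\theta_0)\stackrel{D}{\to}\mathrm{Exp}(1)$, with a limit independent of $x$.

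To promote this to joint convergence, fix bounded continuous $f\colon\mathbb{R}\to\mathbb{R}$ and $h\colon\mathbb{R}^d\to\mathbb{R}$. Then
\[
E\bigl[f(z_{i,n}(\theta_0))\,h(\xi_i)\bigr]=E\bigl[\,E[f(z_{i,n}(\theta_0))\mid\xi_i]\,h(\xi_i)\,\bigr]\;\longrightarrow\;E[f(Z)]\,E[h(\xi_i)],
\]
by dominated convergence (the inner expectation is bounded by $\|f\|_\infty$ and converges $P_0$-a.s.\ by the previous step). Hence $(z_{i,n}(\theta_0),\xi_i)\stackrel{D}{\to}(Z,\xi_i)$ with $Z\perp\xi_i$. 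Writing each component of $X$ as $\phi(\xi_i)$ for continuous $\phi$ and applying the continuous mapping theorem to $(z,x)\mapsto(z,\phi(x))$ gives $(z_{i,n}(\theta_0),X)\stackrel{D}{\to}(Z,X)$ with $Z\perp X$; Slutsky's lemma then delivers the stated convergence for $(z_{i,n}(\theta_0),X_n)$.

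The main obstacle is the conditional exponential limit. The probability integral transform requires only continuity of $\rho$ (guaranteed by A1) rather than strict monotonicity, and the exponential limit of $n\min_jV_j$ for i.i.d.\ uniforms is routine. The only other delicate point is the interchange of limit and conditional expectation in the joint step, which is handled by restricting to bounded continuous test functions via the portmanteau theorem and invoking dominated convergence once.
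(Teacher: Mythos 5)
Your proof is correct and takes essentially the same route as the paper: both arguments rest on the exact independence of $z_{i,n}(\theta_0)$ and $\xi_i$ under the true model (which the paper simply asserts and you justify via the probability integral transform), the marginal $\mathrm{Exp}(1)$ limit, and a converging-together (Slutsky/Billingsley) step to replace $X_n$ by $X$. The only difference is one of detail, not of method — you supply the conditional/PIT computation that the paper leaves implicit.
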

\begin{proof} We have to show that for any $Z$-continuity set $A_1$ and any $X$-continuity set $A_2$,
\begin{equation} \label{Distrzrest}
P(z_{i,n}(\tho)\in A_1,X_n\in A_2)\to P(Z\in A_1, X\in A_2)=P(Z\in A_1)P(X\in A_2),
\end{equation}
where the last equality holds since $Z$ and $\xi_i$ are independent. Since $X_n\stackrel{p}{\to}X$ and $z_{i,n}(\tho)$ is also independent of $\xi_i$, Theorem 4.3 in \citet{billingsley68} implies that (\ref{Distrzrest}) is the same as
\[P(z_{i,n}(\tho)\in A_1,X\in A_2)= P(z_{i,n}(\tho)\in A_1)P(X\in A_2)\to P(Z\in A_1)P(X\in A_2). \]
\end{proof}
\begin{lemma} \label{UnifLambda} Suppose assumptions

i)  A2, A3, A4

ii) A2, A3

\noindent are fulfilled. Then $\la_n(\thg)\to \la(\thg)$  uniformly for $\thg \in \Theta_0$ as $n\to \infty$ for the functions
i) $h_1$, $h_2$, $h_5$ with $\alpha \in (0,1)$, and ii) $h_5$ with $\alpha \in (1,2]$, respectively.
\end{lemma}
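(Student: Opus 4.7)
The plan is to establish pointwise convergence $\la_n(\theta) \to \la(\theta)$ at each $\theta \in \Theta_0$ by combining distributional convergence with a uniform integrability argument, and then to upgrade to uniform convergence on the (taken to be compact) neighbourhood $\Theta_0$ via equicontinuity of $\{\la_n\}_n$.

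For the pointwise distributional limit, I would write $\psig = v(z_{i,n}(\tho)\,R_n^f(\theta,\xi_i))\,R_n^{\nabla}(\theta,\xi_i)$, where
\[ R_n^f(\theta,\xi_i) = \frac{1}{P_{\tho}(B_n(\xi_i))}\int_{B_n(\xi_i)} f_\theta(y)\,dy, \qquad R_n^{\nabla}(\theta,\xi_i) = \frac{1}{P_\theta(B_n(\xi_i))}\int_{B_n(\xi_i)} \nabla f_\theta(y)\,dy. \]
The almost-sure limits displayed just after the definition of $\tilde\psi_n$ show that $(R_n^f,R_n^{\nabla}) \to (f_\theta(\xi_i)/g(\xi_i),\nabla f_\theta(\xi_i)/f_\theta(\xi_i))$, continuous functions of $\xi_i$ alone. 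Applying Proposition \ref{ConvDistrProp} with $X_n = (R_n^f,R_n^{\nabla})$ then yields joint convergence to $(Z, f_\theta/g, \nabla f_\theta/f_\theta)$ with $Z \sim \mbox{Exp}(1)$ independent of $\xi_i$, and the continuous mapping theorem delivers $\psig \stackrel{D}{\to} \psi(\xi_i,\theta)$.

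Next I would verify uniform integrability of $\{\psig\}_n$ and invoke Vitali to obtain $\la_n(\theta) \to \la(\theta)$. In case i), $v(z) = h'(z)z$ has at most linear growth (with a $\log z$ correction for $h_2$, or as $z^\alpha$ with $\alpha<1$ for $h_5$), so $|v(z_{i,n}(\theta))|^{1+\delta}$ has uniformly bounded expectation using A3 together with tightness and moment bounds of $z_{i,n}(\tho)$. Combining this with the uniform integrability of $R_n^{\nabla}$ granted by A4 through Hölder's inequality produces uniform integrability of $\psig$. In case ii), $v(z) = \alpha(z^\alpha - z)$ with $\alpha \in (1,2]$; the identity
\[ z_{i,n}(\theta)\,R_n^{\nabla}(\theta,\xi_i) = n\int_{B_n(\xi_i)} \nabla f_\theta(y)\,dy, \]
together with $z_{i,n}(\theta)^{\alpha-1} \le 1 + z_{i,n}(\theta)$ (since $\alpha-1 \le 1$), reduces the required moment bound to an estimate controllable by A3 alone, bypassing A4.

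For the uniformity in $\theta$, I would establish equicontinuity of $\{\la_n\}_n$ on $\Theta_0$: A2 ensures that $\theta \mapsto \psig$ is almost surely continuous, and the bounds from the previous step (sharpened using the $\sup_{\Theta_0}$ clauses of A4 in case i, or the analogous identity-based estimate in case ii) furnish an integrable modulus of continuity uniformly in $n$. Pointwise convergence combined with equicontinuity on compact $\Theta_0$ forces uniform convergence. The main obstacle will be case ii) in the integrability step, where A4 is unavailable: one must combine the displayed identity $z_{i,n}(\theta) R_n^{\nabla} = n\int \nabla f_\theta$ with a Taylor expansion $f_\theta/g = 1 + O(|\theta-\tho|)$ (justified by A2 and A3) to handle both the $z^\alpha$ factor and the equicontinuity of $\la_n$ without appealing to A4.
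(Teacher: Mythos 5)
Your high-level architecture --- distributional convergence of $\psig$ via Proposition \ref{ConvDistrProp} and the continuous mapping theorem, passage to expectations by uniform integrability, then an upgrade from pointwise to uniform convergence --- shares its essential ingredients with the paper's proof, which instead inserts the intermediate quantity $\psigt$ and splits $\la_n(\thg)-\la(\thg)$ into $E[\psig-\psigt]$ and $E[\psigt-\psi(\xi_i,\thg)]$. But there is a genuine gap precisely at the step your plan leans on most: the assertion that A2 together with the integrable envelopes ``furnish an integrable modulus of continuity uniformly in $n$'' for $\{\la_n\}$. An integrable envelope for $\sup_{\Theta_0}|\psig|$ (which A3 and A4 do give) says nothing about how the continuity of $\thg\mapsto\psig$ may degenerate as $n\to\infty$. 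The map $\thg\mapsto \frac{1}{P_{\thg}(B_n(\xi))}\int_{B_n(\xi)}\partial f_{\thg}(y)\,dy$ is continuous for each fixed $n$ and $\omega$, but to obtain a modulus that is uniform in $n$ one must confine the shrinking balls $B_n(\xi)$ to a fixed compact set on which $f_{\thg}$ and $\partial f_{\thg}$ are uniformly continuous jointly in $(y,\thg)$. The paper achieves this by restricting to $\{\xi\in K\}\cap\{z_n(\tho)\le M\}$ and to the Egoroff set $A$ on which $R_n\to 0$ uniformly (so that $B_n(\xi)\subset K_{\delta}$ for large $n$), and then uses uniform integrability of $\sup_{\Theta_0}|\psig|$ to make the contribution of the complementary event uniformly small; this is exactly the tightness argument in the paper's treatment of its Term$_{\mathrm{I}}$. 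Without some version of this device your Arzel\`a--Ascoli step does not close.

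A secondary problem is the appeal in case ii) to a Taylor expansion $f_{\thg}/g=1+\mathcal{O}(|\thg-\tho|)$ ``justified by A2 and A3'': these assumptions give continuity and an $L^2$ bound on $\partial f_{\thg}/g$, not a bound uniform in $x$, so the expansion is not available in the form you invoke it. The rest of your outline (the factorization $\psig=v(z_{i,n}(\tho)R_n^{f})R_n^{\nabla}$, the identity $z_{i,n}(\thg)R_n^{\nabla}=n\int_{B_n(\xi_i)}\nabla f_{\thg}(y)\,dy$, and the bound $z^{\alpha-1}\le 1+z$ used to avoid A4 in case ii)) is sound and consistent with how the paper exploits the structure of $v$ for the various $h$-functions; the missing piece is the uniform-in-$n$ control of continuity in $\thg$, which is the genuinely delicate part of the lemma.
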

\noindent The proof of Lemma \ref{UnifLambda} is given in the Appendix.
Suppose that $-|\la(\thg)|$ has a unique maximum at $\theta_0$ in $\Theta_0$. This holds for example under the following weak identifiability condition:
\begin{equation} \label{weakTheta0n}
\mu\{ x: \, f_{\thg}(x) \neq f_{\theta_0}(x)  \} >0 \quad \mbox{for} \quad \theta \in \Theta_0,
\end{equation}
where $\mu$ is Lebesgue measure. Then it follows from Lemma \ref{UnifLambda} that $\thon \to \theta_0$. In the following we assume that (\ref{weakTheta0n}) is fulfilled.

\begin{proposition} \label{Prop6} Suppose assumptions

i)  A2, A3, A4, A5

ii) A2, A3, A4, A5, A7

iii) A2, A3, A7

\noindent are fulfilled.
Then
\[ \frac{1}{\sqrt{n}} \sum_{i=1}^n {\psi}_n(\xi_i,\thon)- \frac{1}{\sqrt{n}} \sum_{i=1}^n \tilde{\psi}_n(\xi_i,\theta_0) \stackrel{p}{\to} 0 \]
holds for the functions i) $h_1$, $h_5$ with $\alpha \in (0,1)$, ii) $h_2$, iii) $h_5$ with $\alpha \in (1,2]$, respectively.
\end{proposition}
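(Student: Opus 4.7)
The plan is to prove the proposition by $L^2$ convergence of the difference to zero. Write $D_i = \psion - \tilde\psi_n(\xi_i,\theta_0)$ and decompose through the intermediate quantity $\tilde\psi_n(\xi_i,\thon)$:
\[
D_i = \underbrace{\psion - \tilde\psi_n(\xi_i,\thon)}_{E_i} \; + \; \underbrace{\tilde\psi_n(\xi_i,\thon) - \tilde\psi_n(\xi_i,\theta_0)}_{F_i}.
\]
The term $E_i$ measures the discrepancy between the integral averages appearing in $\psion$ and their almost sure pointwise limits, while $F_i$ measures the $\theta$-continuity of the pointwise map $\theta \mapsto \tilde\psi_n(\xi_i,\theta)$ evaluated along the sequence $\thon \to \theta_0$. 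It therefore suffices to establish $E[n^{-1/2}\sum_i E_i]^2 \to 0$ and $E[n^{-1/2}\sum_i F_i]^2 \to 0$ separately, and then combine by Cauchy-Schwarz.

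Expanding each square gives a diagonal term of the form $E E_1^2$ (or $E F_1^2$) and an off-diagonal term $(n-1)E[E_1 E_2]$ (or $(n-1)E[F_1 F_2]$). For the diagonals, the almost sure convergence
\[
\frac{1}{P_{\thg}(B_n(\xi_i))}\int_{B_n(\xi_i)}\nabla f_{\thg}(y)\,dy \;\to\; \frac{\nabla f_{\thg}(\xi_i)}{f_{\thg}(\xi_i)}, \qquad \thon \to \theta_0,
\]
quoted in Section 1.2, combined with the uniform integrability assumptions A4 and A6 on the integral averages, A5 to absorb the $\thon$-slot, and A3 (case (i)) or A7 (cases (ii)-(iii)) for the pointwise envelopes $\nabla f_\theta / g$ and $f_\theta/g$, yields $E E_1^2 \to 0$ and $E F_1^2 \to 0$. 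The case split is driven by the growth of $v(z) = h'(z)z$: for $h_1$ and $h_5$ with $\alpha \in (0,1)$ the function $v$ is linear-in-$z$ and has an $x^\alpha$ piece with $\alpha<1$, so second moments suffice (case (i)); $h_2$ produces a $z\ln z$ term that costs a logarithmic factor and hence the stronger A7 (case (ii)); $h_5$ with $\alpha \in (1,2]$ produces a $z^\alpha$ term with $\alpha>1$, requiring fourth moments of $f_\theta/g$ and $\nabla f_\theta/g$, which is exactly A7 without needing A4/A5 (case (iii)).

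For the off-diagonals, the direct bound $(n-1)\sqrt{E E_1^2 \, E E_2^2}$ fails because the diagonal term tends to zero but not at rate $o(1/n)$. Instead, I will reuse the five-event nearest-neighbour partition from the proof of Proposition~\ref{normconv}: decompose the product expectation over the mutually exclusive events $D_1,\dots,D_5$. On $D_5$, where neither of $\xi_1,\xi_2$ is the nearest neighbour of the other and they share no common nearest neighbour, the balls $B_n(\xi_1)$ and $B_n(\xi_2)$ decouple so that $E_1,F_1$ become conditionally independent of $E_2,F_2$, and the conditional covariance vanishes up to the negligible event that the two balls overlap. On $D_1,\dots,D_4$, Schilling's geometric estimates give $P(D_j) = O(1/n)$, so $(n-1)P(D_j) = O(1)$, and it is enough that the conditional second moments be bounded; this follows from the same uniform integrability assumptions used on the diagonals.

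The main obstacle is the covariance control: handling the conditioning on $D_1,\dots,D_4$ requires that the uniform integrability in A4-A6 survives after conditioning on the nearest-neighbour configuration, which in turn relies on the fact that, on each $D_j$, the conditional law of the ball radius can be dominated by the unconditional one up to a constant. A subtler point is that in $E_i$ the factor $v(z_{i,n}(\thon))$ is evaluated at the random parameter $\thon$; this must be controlled by passing through an envelope $\sup_{\Theta_0} v(z_{i,n}(\theta_0) f_\theta(\xi_i)/g(\xi_i))$ plus a continuity correction, which is why A3/A7 bound the pointwise ratios uniformly over $\Theta_0$ rather than just at $\theta_0$.
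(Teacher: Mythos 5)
Your overall machinery is the right one and essentially matches the paper's: a second-moment bound whose diagonal term is killed by the almost sure convergence of the integral averages plus uniform integrability, and whose off-diagonal term is handled by Schilling's five-event nearest-neighbour partition with $P(D_j)=\mathcal{O}(1/n)$ for $j\le 4$. Two steps, however, do not close as written. The more serious one: the split through $\tilde{\psi}_n(\xi_i,\thon)$ destroys the zero-mean structure that the second-moment method relies on. One has $E[\psi_n(\xi_i,\thon)]=\lambda_n(\thon)=0$ and $E[\tilde{\psi}_n(\xi_i,\tho)]=0$, so the undecomposed difference $A_{i,n}=\psi_n(\xi_i,\thon)-\tilde{\psi}_n(\xi_i,\tho)$ is centered; but $E[\tilde{\psi}_n(\xi_i,\thon)]$ is not zero in general, hence $E[E_i]=-E[F_i]\neq 0$. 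Then $E\bigl[n^{-1/2}\sum_i E_i\bigr]^2$ contains $(n-1)E[E_1E_2]$, and on $D_5$ conditional independence gives $E[E_1E_2\mid D_5]\approx (E[E_1])^2$, so the $D_5$ contribution is of order $n\,(E[E_1])^2$. Your remark that the conditional covariance vanishes is true but not what is needed: it is the conditional expectation of the product that enters. To rescue your decomposition you would need $|E[\tilde{\psi}_n(\xi,\thon)]|=o(n^{-1/2})$, a bias rate that none of the assumptions provides and that is of exactly the same delicate nature as the $\sqrt{n}\lambda_n(\theta_0)\to 0$ question the paper defers to simulation in its Discussion. The paper sidesteps this entirely by never splitting: it works directly with the centered $A_{i,n}$.

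Second, on $D_1,\ldots,D_4$ you assert that it is enough for the conditional second moments to be bounded. It is not: $(n-1)P(D_j)=\mathcal{O}(1)$ times a bounded conditional moment gives $\mathcal{O}(1)$, not $o(1)$. You need the conditional cross-moments to tend to zero; the paper obtains this from $|E[A_{1,n}A_{2,n}\mid D_j]|\le E[A_{1,n}^2\mid D_j]=E[A_{1,n}^2]\to 0$, so the same estimate that kills the diagonal also kills these terms. (A minor further point: you invoke A6, which is not among the hypotheses of this proposition; A5 is what controls the square of the integral average in the $\thon$ slot.) Both gaps are repairable by reverting to the undecomposed, mean-zero difference and recycling the diagonal estimate for the off-diagonal terms --- which is precisely the paper's proof.
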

\begin{proof} Since we are considering convergence in probability, there is no restriction to assume that the studied parameter is one-dimensional (corresponds to looking at the components separately).
Let $A_{i,n}={\psi}_n(\xi_i,\thon)-\tilde{\psi}_n(\xi_i,\theta_0)$. Observe that $E(A_{i,n})=0$ and
\[ E\left( \frac{1}{\sqrt{n}} \sum_{i=1}^n A_{i,n} \right)^2 = E(A_{1,n}^2)+(n-1)E(A_{1,n}A_{2,n}). \]
But $E(A_{1,n}A_{2,n})=\sum_{i=1}^5 P(D_i) E(A_{1,n}A_{2,n}|D_i)$, cf.~the proof of Proposition \ref{normconv}. Given $D_5$, the variables $A_{1,n}$ and $A_{2,n}$ are independent, thus $E(A_{1,n} A_{2,n}| D_5)=0$. For $i=1,\ldots,4$,
$P(D_i)=\mathcal{O}(1/n)$ and $E(A_{1,n} A_{2,n} |D_i)\le E(A_{1,n}^2| D_i)=E(A_{1,n}^2)$. Therefore, the assertion follows if $E(A_{1,n}^2)\to 0$.  Write $A_{1,n}$ as
\[ A_{1,n}= h'(z_{1,n}(\thon))z_{1,n}(\theta_0) \frac{1}{P_{\theta_0}(B_n(\xi_1)} \int_{B_n(\xi_1)} \partial f_{\thon}(y)dy  \]
\[ - h'(z_{1,n}(\theta_0))z_{1,n}(\theta_0)\frac{\partial f_{\theta_0}(\xi_1)}{g(\xi_1)}, \]
and recall that
\[ z_{1,n}(\thon)= z_{1,n}(\theta_0) \frac{1}{P_{\theta_0}(B_n(\xi_1))} \int_{B_n(\xi_1)} f_{\thon} (y) dy. \]
Proposition \ref{ConvDistrProp} together with Lemma \ref{UnifLambda} imply that $A_{1,n}\stackrel{D}{\to} 0$ and $A_{1,n}^2\stackrel{D}{\to} 0$. Thus, $E(A_{1,n})^2 \to 0$ follows because under our assumptions the random variables  ${\psi}_n^2(\xi_1,\thon)$ and $\tilde{\psi}_n^2(\xi_1,\theta_0)$ are uniformly integrable.
\end{proof}
%
%
\section{Asymptotic normality of GMSP estimate via stochastic differentiability}
To prove asymptotic normality of $\hat{\thg}_n$, we need to use a stochastic differentiability condition similar to \citet{pollard85} and \cite{huber67}.
We will prove that
\begin{equation} \label{resConv1}
 \sup_{\Thn} \frac{\left|\juur \sum_i (\psig -\psion) -\sqrt{n} \lambda_n(\thg) \right|}{1+\sqrt{n}|\thg-\tho^{(n)}|} \stackrel{p}{\to} 0, \end{equation}
where $\Thn$ is a compact set shrinking to $\theta_0$ as $n\to \infty$.
%

To prove (\ref{resConv1}), we will consider the numerator of the expression in (\ref{resConv1}) separately on a compact set $K\subset \mathbb{R}^d$  and its complement $K^c$, and show that the contribution from  $K^c$ is arbitrarily small when choosing $K$ large enough. Let
\[ \psigK=\psig I_K(\xi_i), \quad \psigKc=\psig I_{K^c}(\xi_i),  \]
\[ \quad E[{\psigK}] =\ltgK, \quad E[\psigKc]=\lambda_n(\theta,K^c).  \]
Consider the following decomposition of the numerator in (\ref{resConv1}):
\[ \juur \sum_i (\psig -\psion) -\sqrt{n} \lambda_n(\thg)\]
\[= \juur \sum_i (\psigK -\psionK)-\sqrt{n}(\ltgK-\la_n(\tho^{(n)},K)) \]
\[ + \juur \sum_i (\psigKc -\psionKc)- \sqrt{n}(\la_n(\thg,K^c)-\la_n(\tho^{(n)},K^c)). \]
We are going to show the following:

\noindent 1) $\forall \varepsilon>0$, a compact set $K \subset \mathbb{R}^d$ can be chosen so that for large $n$,
\begin{equation} \label{RestCompKc}
P\left( \sup_{\Thn} \frac{\left|\juur \sum_i (\psigKc - \psionKc)-\sqrt{n}(\la_n(\thg,K^c)-\la_n(\tho^{(n)},K^c))\right|}{1+\sqrt{n}|\thg-\tho^{(n)}|}
 >\varepsilon \right) <\varepsilon,
\end{equation}

\noindent 2) for any compact set $K \subset \mathbb{R}^d$,
 \begin{equation} \label{resConv11}
 \sup_{\Thn} \frac{\left|\juur \sum_i (\psigK - \psionK) -\sqrt{n}(\ltgK-\la_n(\tho^{(n)},K))\right|}{1+\sqrt{n}|\thg-\tho^{(n)}|} \stackrel{p}{\to} 0. \end{equation}
Therefore, (\ref{RestCompKc}) and (\ref{resConv11}) together imply (\ref{resConv1}).

Let $V_n(\theta)$ denote the following matrix of partial derivatives:
$$V_n(\theta) = (V_n^{(j,l)}(\theta))=\left(\frac{\partial \lambda_{n,j}(\theta)}{\partial \theta_l} \right), \quad j,l=1,\ldots,q,$$
where $\lambda_{n,j}(\theta)$ is the $j$th element of the vector $\lambda_n(\theta)$. Recall that $\psi_{n,j}(\xi,\thg)$ and $\psi_{j}(\xi,\thg)$ denote the $j$th component of the vectors $\psi_n(\xi,\thg)$ and $\psi(\xi,\thg)$, respectively. Let $V(\thg)=(V^{(j,l)}(\thg))$ with $V^{(j,l)}(\thg)=E\left[ \frac{\partial}{\partial \theta_l} \psi_{j}(\xi,\thg)\right]$, $j,l=1,\ldots,q$.
\begin{lemma} \label{lemma2} Suppose assumptions

i)  A6, A8, A9

ii) A6, A8, A10

iii) A3, A10

\noindent are fulfilled. Then the following assertions hold for i) $h_1$, $h_5$ with $\alpha \in (0,1)$, ii) $h_2$ and iii) $h_5$ with $\alpha \in (1,2]$, respectively. In a neighbourhood of $\theta_0$, $\lambda_n(\theta)$ is continuously differentiable. Furthermore, $V_n(\thg) \to V(\thg)$ uniformly for $\thg \in \Theta_0$ as $n\to \infty$.
\end{lemma}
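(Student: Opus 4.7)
The plan is to establish continuous differentiability of $\lambda_n(\theta)$ by justifying the interchange of differentiation with the expectation operator, and then to prove $V_n(\theta)\to V(\theta)$ uniformly by combining almost-sure convergence of the ball-averaging ratios with uniform integrability (A6, A8, A9/A10) and an equicontinuity argument in $\theta$.

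First I would compute $\frac{\partial}{\partial\theta_l}\psi_{n,j}(\xi_i,\theta)$ explicitly. Differentiating the product $v(z_{i,n}(\theta))\cdot \frac{1}{P_\theta(B_n(\xi_i))}\int_{B_n(\xi_i)}\partial_j f_\theta(y)\,dy$ produces two groups of terms: one from $v'(z_{i,n}(\theta))\,\partial_l z_{i,n}(\theta)$ multiplying the ball-averaged gradient, and one from applying the quotient rule to the ball average, which generates a term containing $\int_{B_n}\partial^2_{jl} f_\theta$ and a cross term involving the product of two first-derivative ball averages divided by $P_\theta(B_n)^2$. To differentiate under the expectation I need $\sup_{\Theta_0}|\partial_l\psi_{n,j}(\xi_i,\theta)|$ to be dominated by an integrable envelope. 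In case i), the cross term is controlled by A6 (second moment of the ball-averaged gradient) via Cauchy--Schwarz, the $\partial^2 f$ term by A8 together with A9, and the $v'\cdot \partial_l z_{i,n}$ piece by A3 and A6; cases ii) and iii) use A10 in place of A9, and case iii) further replaces A4/A6 by A3/A7 because $v(x)$ and $v'(x)$ grow polynomially for $h_5$ with $\alpha\in(1,2]$ and the ball-averaging route is no longer available. Continuity of $\partial_l\psi_{n,j}(\xi_i,\theta)$ in $\theta$ (from A2) plus dominated convergence then gives continuity of $\partial_l\lambda_{n,j}$, establishing the first claim $V_n=(\partial\lambda_{n,j}/\partial\theta_l)$ exists and is continuous near $\theta_0$.

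For the uniform convergence $V_n(\theta)\to V(\theta)$, I would follow the pattern of Lemma \ref{UnifLambda}. Pointwise in $\theta$, the key ball-averaging quantities converge almost surely to their infinitesimal limits: by the Lebesgue differentiation theorem together with A1--A2,
\[
\frac{1}{P_{\theta_0}(B_n(\xi))}\int_{B_n(\xi)} f_\theta \stackrel{a.s.}{\to}\frac{f_\theta(\xi)}{g(\xi)},\quad
\frac{1}{P_\theta(B_n(\xi))}\int_{B_n(\xi)}\partial f_\theta \stackrel{a.s.}{\to}\frac{\partial f_\theta(\xi)}{f_\theta(\xi)},
\]
and analogously for the second-derivative ratio. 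Consequently each of the summands making up $\partial_l\psi_{n,j}(\xi,\theta)$ converges almost surely to the corresponding summand of $\partial_l\psi_j(\xi,\theta)$. Invoking Proposition \ref{ConvDistrProp} to pair these with $z_{i,n}(\theta_0)\stackrel{D}{\to}Z\sim\mathrm{Exp}(1)$ and applying uniform integrability (A6, A8, A9 or A10 depending on the case) upgrades the almost-sure convergence to convergence of expectations, giving $V_n^{(j,l)}(\theta)\to V^{(j,l)}(\theta)$ for each $\theta\in\Theta_0$.

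Uniformity in $\theta$ follows from an equicontinuity argument: A2 makes the integrands continuous in $\theta$, and the envelopes $\sup_{\Theta_0}|\cdot|$ in A6, A8, A9/A10 provide a common integrable majorant, so dominated convergence yields equicontinuity of the family $\{V_n^{(j,l)}\}$ on the compact set $\Theta_0$; pointwise convergence plus equicontinuity then gives uniform convergence. The main obstacle is the term $v'(z_{i,n}(\theta))\,\partial_l z_{i,n}(\theta)$ times the first-derivative ball average, because $v'$ is bounded only for $h_1$ and grows logarithmically or polynomially for $h_2$ and $h_5$ with $\alpha\in(1,2]$; controlling it is exactly what forces the case split between (i), (ii) and (iii) and what makes the stronger moment conditions A7/A10 necessary in the latter two cases.
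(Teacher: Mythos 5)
Your proposal follows essentially the same route as the paper: uniform integrability of $\sup_{\Theta_0}|\partial_l\psi_{n,j}|$ justifies differentiating under the expectation, continuity of $V_n$ follows by dominated convergence, pointwise convergence $V_n^{(j,l)}(\theta)\to V^{(j,l)}(\theta)$ comes from Proposition \ref{ConvDistrProp} plus uniform integrability, and uniformity in $\theta$ is obtained by the same tightness/equicontinuity argument used for $\lambda_n$ in Lemma \ref{UnifLambda}. One small correction: in case iii) the lemma invokes A3 and A10 (not A7, which appears only in Proposition \ref{Prop6}), though this does not affect the structure of your argument.
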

\begin{proof}
The assumptions of the lemma ensure uniform integrability of the random variables $\sup_{\Theta_0} \left|\frac{\partial}{\partial \theta_l} \psi_{n,j}(\xi,\thg)\right|$.
Thus,
\begin{equation} \label{VnIntegr}
E\left[ \sup_{\Theta_0} \left| \frac{\partial}{\partial \theta_l} \psi_{n,j}(\xi,\thg)\right| \right] <\infty.
\end{equation}
Therefore, we can differentiate under the integral sign and
\[ \frac{\partial}{\partial \theta_l} \lambda_{n,j}(\theta) =E\left[ \frac{\partial}{\partial \theta_l} \psi_{n,j}(\xi,\thg)\right]. \]
Since $\frac{\partial}{\partial \theta_l} \psi_{n,j}(\xi,\thg)$ are continuous functions of $\thg$, it follows from (\ref{VnIntegr}) and the Lebesgue dominated convergence theorem that $V_n(\thg)$ is continuous in $\thg$.
Proposition \ref{ConvDistrProp} implies $\frac{\partial}{\partial \theta_l} \psi_{n,j}(\xi,\thg) \stackrel{D}{\to} \frac{\partial}{\partial \theta_l} \psi_j(\xi,\thg) $. The uniform integrability gives $V_n(\thg) \to V(\thg)$ for every $\theta \in \Theta_0$. The uniform convergence of $V_n(\thg)$ can be proved in the same way as the uniform convergence of $\la_n(\thg)$ in Lemma \ref{UnifLambda}.
\end{proof}
\begin{proposition} \label{HuberKc}
Suppose the assumptions of Lemma \ref{lemma2} hold. Then $\forall \varepsilon>0$ a compact set $K\subset \mathbb{R}^d$ can be chosen so that (\ref{RestCompKc}) holds
for large $n$ for $h_1$, $h_2$ and $h_5$ with $\alpha \in (0,1)\cup(1,2]$.
\end{proposition}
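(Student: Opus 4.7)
The plan is to use the mean value theorem to extract a factor of $|\thg - \thon|$ from the numerator, after which the denominator $1+\sqrt{n}|\thg-\thon|$ absorbs the remaining $\sqrt{n}$ rate. What will be left is a sample mean plus an expectation of $\sup_{\Theta_0}\|\partial_\theta \psi_n(\cdot,\thg)\|\, I_{K^c}(\cdot)$, both of which can be made uniformly small in $n$ by choosing $K$ large, using the uniform integrability packaged in Lemma \ref{lemma2}.

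More concretely: for $n$ large enough, $\Thn \subset \Theta_0$, so Lemma \ref{lemma2} gives that $\psi_n(\xi_i,\cdot)$ is continuously differentiable there. Since $I_{K^c}(\xi_i)$ does not depend on $\thg$, a componentwise mean value expansion yields, with $M_{n,K^c}(\xi_i) := \sup_{\Theta_0} \|\partial_\theta \psi_n(\xi_i,\thg)\|\, I_{K^c}(\xi_i)$, the pointwise bound $|\psigKc - \psionKc| \le |\thg-\thon|\, M_{n,K^c}(\xi_i)$ and the expectation version $|\la_n(\thg,K^c) - \la_n(\thon,K^c)| \le |\thg-\thon|\, E M_{n,K^c}(\xi)$. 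Combining these by the triangle inequality and using the elementary estimate $|\thg-\thon|/(1+\sqrt{n}|\thg-\thon|) \le 1/\sqrt{n}$, the ratio in (\ref{RestCompKc}) is dominated, uniformly in $\thg \in \Thn$, by the $\thg$-free quantity
\[ \frac{1}{n}\sum_i M_{n,K^c}(\xi_i) + E\!\left[M_{n,K^c}(\xi)\right], \]
whose mean is $2E M_{n,K^c}(\xi)$. A Markov inequality then bounds the probability in (\ref{RestCompKc}) by $2E M_{n,K^c}(\xi)/\varepsilon$.

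The final step is a uniform integrability argument. The three assumption combinations of Lemma \ref{lemma2} were set up precisely so that the family $\{\sup_{\Theta_0}\|\partial_\theta\psi_n(\xi,\thg)\|\}_{n\ge 1}$ is uniformly integrable. By the standard reformulation of uniform integrability, $\sup_n E\bigl[\sup_{\Theta_0}\|\partial_\theta\psi_n(\xi,\thg)\|\, I_A\bigr] \to 0$ whenever $P(A)\to 0$; choosing $K$ large enough that $P(\xi \in K^c)$ is sufficiently small therefore makes $E M_{n,K^c}(\xi) < \varepsilon^2/2$ uniformly in $n$ for large $n$, delivering (\ref{RestCompKc}).

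The main obstacle I anticipate is not the mean value step itself but the bookkeeping across the three function families $h_1$, $h_2$ and $h_5$ (with $\alpha \in (0,1)\cup(1,2]$): the explicit form of $\partial_\theta \psi_n$ differs in each case and the integrability hypotheses (A6, A8, A9 versus A6, A8, A10 versus A3, A10) are grouped differently in Lemma \ref{lemma2}, so one must verify case by case that the dominating function $\sup_{\Theta_0}\|\partial_\theta\psi_n(\xi,\thg)\|$ really is uniformly integrable. Once this is granted via Lemma \ref{lemma2}, the remainder of the argument is uniform across the cases.
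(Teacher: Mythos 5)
Your proposal is correct and follows essentially the same route as the paper: a componentwise mean value expansion, cancellation of the $\sqrt{n}$ against the denominator, a Markov bound of the form $\tfrac{2}{\varepsilon}E\bigl[\sup_{\Theta_n}|\partial_\theta\psi_n(\xi,\theta,K^c)|\bigr]$, and then the uniform integrability supplied by the assumptions of Lemma \ref{lemma2} to make this small for $K$ large, uniformly in $n$. The only cosmetic difference is that the paper keeps the centering term $V_n^{(j,l)}(\tilde\theta,K^c)$ inside the mean value bound and absorbs it with a triangle inequality, whereas you split the sample average and the expectation first; the resulting estimate is identical.
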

\begin{proof}
Since $\psi_n$ is a vector and the Euclidean norm of a vector is smaller than the sum of the absolute values of its components, it is equivalent to work with single components of the vector and show that the contribution from each component is small. Applying the mean value theorem we obtain:
\[ \frac{\left|\juur \sum_i (\psigKcj - \psionKcj)-\sqrt{n}(\la_{n,j}(\thg,K^c)-\la_{n,j}(\tho^{(n)},K^c))\right|}{1+\sqrt{n}|\thg-\tho^{(n)}|}  \]
\[ \le \frac{1}{n} \sum_{i=1}^n \sum_{l=1}^q \left| \frac{\partial}{\partial \theta_l} \psi_{n,j}(\xi_i,\tilde{\thg},K^c) - V_n^{(j,l)}(\tilde{\thg},K^c)  \right| \]
\[ \le \frac{1}{n} \sum_{i=1}^n \sum_{l=1}^q \sup_{\Thn} \left| \frac{\partial}{\partial \theta_l} \psi_{n,j}(\xi_i,{\thg},K^c) - V_n^{(j,l)}({\thg},K^c)  \right|.   \]
Thus,
\[  P\left( \sup_{\Thn} \frac{\left|\juur \sum_i (\psigKcj - \psionKcj)-\sqrt{n}(\la_{n,j}(\thg,K^c)-\la_{n,j}(\tho^{(n)},K^c))\right|}{1+\sqrt{n}|\thg-\tho^{(n)}|}
 >\varepsilon \right)\]
 \[ \le \frac{1}{\varepsilon} \sum_{l=1}^q E\left[ \sup_{\Thn} \left| \frac{\partial}{\partial \theta_l} \psi_{n,j}(\xi_i,{\thg},K^c) -  V_{n}^{(j,l)}({\thg},K^c)\right|\right] \]
\[ \le \frac{2}{\varepsilon} \sum_{l=1}^q E\left[ \sup_{\Thn} \left| \frac{\partial}{\partial \theta_l} \psi_{n,j}(\xi_i,{\thg},K^c) \right| \right] < \varepsilon \]
if $K$ is large and if $n>n_0$ for some $n_0$.
\end{proof}

To prove (\ref{resConv11}), we will use Lemma 4 in \citet{pollard85}, which is based on bracketing technique, see \cite{vaart2000} and \cite{pollard85}. The bracketing condition enables to divide the parameter set of interest into a finite number of subsets and study the supremum of interest over a finite number of smaller parameter sets. We need also to use the following property of the radii of our nearest neighbour balls: $R_n(i) \stackrel{a.s.}{\to} 0$ for every $i$. Therefore, according to Egoroff's theorem there exists for each $i$ a set $A_i$ with $P(A_i)>1-\frac{\varepsilon}{2}2^{-i}$ such that $R_n(i)\to 0$ uniformly on $A_i$. Therefore, we can define a set $A=\bigcap_{i=1}^{\infty} A_i$, such that $P(A^c)<\varepsilon/2$.
\newline \newline
\textbf{Bracketing}.
 Lemma 4 in \citet{pollard85} will be applied to functions in $$\mathcal{F}_n=\{ [\psigK-\psionK]I_{A}, \, \, |\thg-\tho|\le \alpha_n, \, i=1,\ldots,n\}.$$
Since $[\psigK-\psionK]I_{A}$, $i=1,\ldots,n$, are identically distributed, we can suppress $i$ in $\xi_i$, $B_n(\xi_i)$, $z_{i,n}(\theta)$ and $R_n(i)$ right now. That the bracketing condition is fulfilled follows since the functions $[{\psi}_n(\xi,\theta,K) - {\psi}_n(\xi,\theta_0^{(n)},K)]I_{A}$ satisfy a
Lipschitz condition
\[ |{\psi}_n(\xi,\theta_s,K) - {\psi}_n(\xi,\theta_t,K)| I_{A} \le H_n(\xi,K)|\theta_s - \theta_t|, \]
where
\[ H_n(\xi,K) =q \left[ \sup_{\Thn} |h''(z_n(\theta)) z_n^2(\theta)| (f_{\rm{max}}^{(1)}(\xi))^2  +
 \sup_{\Thn} |h'(z_n(\theta))z_n(\theta)| f_{\rm{max}}^{(2)}(\xi)\right]I_{A}I_K(\xi), \]
 with
\[ f_{\rm{max}}^{(1)}(\xi)=\max_l \sup_{\Thn} \left( \frac{1}{P_{\theta}(B_n)}\int_{B_n} \left|\frac{\partial f_{\theta}(y)}{\partial \theta_l}  \right| dy \right), \quad \]
\[f_{\rm{max}}^{(2)}(\xi)=\max_{j,l} \sup_{\Thn} \left( \frac{1}{P_{\theta}(B_n)}\int_{B_n} \left|\frac{\partial^2  f_{\theta}(y)}{\partial \theta_j \partial \theta_l} \right| dy \right),  \]
and where for some constant $b_1(h,K)$, $E[H_n(\xi,K)] < b_1(h,K) <\infty$ when $n$ is large enough.
\begin{proposition} \label{HuberK}
Consider a compact set $K \subset \mathbb{R}^d$. Suppose assumption A2 is fulfilled. Then the family $\mathcal{F}_n$ satisfies the bracketing condition
and $E H_n^2(\xi,K)< b_2(h,K)<\infty$ holds for some constant $b_2(h,K)$ and for large $n$. Therefore, the convergence in (\ref{resConv11}) holds for $h_1$, $h_2$ and $h_5$ with $\alpha \in (0,1)\cup(1,2]$.
\end{proposition}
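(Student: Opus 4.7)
My plan is to establish three items in sequence: the Lipschitz property and the resulting bracketing condition for $\mathcal{F}_n$, the envelope moment bound $E[H_n^2(\xi,K)] < b_2(h,K) < \infty$ uniformly in large $n$, and the conclusion (\ref{resConv11}) via Lemma 4 of \citet{pollard85}.

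For the Lipschitz setup, I would first verify the displayed bound by differentiating $\theta \mapsto \psi_n(\xi,\theta,K)I_A$ and applying the mean value theorem. Writing $v(z)=h'(z)z$, each component of $\psi_n$ has the form $v(z_n(\theta))\cdot P_\theta(B_n)^{-1}\int_{B_n}\partial f_\theta$; the product rule produces one contribution involving $v'(z_n)=h''(z_n)z_n+h'(z_n)$ multiplied by the squared gradient integral (the factor $h''(z_n)z_n^2$ appearing after using $\partial z_n=z_n\cdot P_\theta(B_n)^{-1}\int \partial f_\theta$), and one contribution with $v(z_n)=h'(z_n)z_n$ multiplied by the second-derivative integral. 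Grouping and taking suprema gives exactly the $H_n(\xi,K)$ displayed above the proposition. With such a Lipschitz constant and $E[H_n^2]<\infty$ in hand (the next step), the bracketing condition follows by the routine covering argument: cover $\Thn$ by $\mathcal{O}(\eta^{-q})$ balls of radius $\eta/\|H_n\|_{L^2}$ centered at points $\theta_k$, and take the brackets $[\psi_n(\cdot,\theta_k,K)-\eta H_n,\psi_n(\cdot,\theta_k,K)+\eta H_n]I_A$, each of $L^2$-diameter at most $2\eta$.

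The envelope moment bound is the main piece of work. On the set $A$ the radii $R_n(\xi)$ tend to zero uniformly, so for all sufficiently large $n$ the ball $B_n(\xi)$ is contained in a fixed compact enlargement $K^{+}$ of $K$. By A2 the functions $f_\theta$, $\partial f_\theta$ and $\partial^2 f_\theta$ are jointly continuous, hence bounded by a constant $M_K$, on $\bar{\Theta}_0\times K^{+}$; after choosing $K\subset\{g>0\}$, continuity and compactness further give $\inf_{(x,\theta)\in K\times\bar{\Theta}_0}f_\theta(x)>0$. These bounds force $f_{\max}^{(1)}(\xi)$ and $f_{\max}^{(2)}(\xi)$ to be dominated by a deterministic constant on $A\cap K$ for large $n$. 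Moreover $z_n(\theta)\le nM_KV(R_n(\xi))$, and the tail bound $P(nP_{\theta_0}(B_n(\xi))>t)\le e^{2}e^{-t}$ already used in the proof of Proposition \ref{GaussProc}, combined with the uniform ratio bound $\sup_{(x,\theta)\in K\times\bar{\Theta}_0}f_\theta(x)/f_{\theta_0}(x)<\infty$, yields $E[z_n(\theta)^p]=\mathcal{O}(1)$ uniformly in $\theta\in\Thn$ and $n$, for every $p$. Direct computation shows that $h'(z)z$ and $h''(z)z^2$ grow at most polynomially in $z$ for $h_1$, $h_2$ and $h_5$ with $\alpha\in(0,1)\cup(1,2]$: for instance $h_1'(z)z=1-z$ and $h_1''(z)z^2=-1$; $h_2'(z)z=1-z-z\ln z$ and $h_2''(z)z^2=-(1+z)$; $h_5''(z)z^2$ is a constant multiple of $z^\alpha$. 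Substituting these bounds into $H_n$ and integrating gives $E[H_n^2(\xi,K)]\le b_2(h,K)<\infty$ for large $n$.

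With the bracketing condition verified and the $L^2$ envelope uniformly bounded, Lemma 4 of \citet{pollard85} applied to the class $\{[\psi_n(\cdot,\theta,K)-\psi_n(\cdot,\thon,K)]I_A:\theta\in\Thn\}$ yields the stochastic equicontinuity assertion (\ref{resConv11}) on the event $A$; since $P(A^c)<\varepsilon/2$ is arbitrary by the Egoroff construction preceding the proposition, the indicator $I_A$ can be removed at the cost of a negligible probability, delivering (\ref{resConv11}). The hard part throughout is the uniform moment control of $z_n(\theta)$ on $K\cap A$: once the Bickel--Breiman-type tail estimate for $nP_{\theta_0}(B_n(\xi))$ is transferred to $nP_\theta(B_n(\xi))$ via the ratio argument valid on the compact $K$, everything else reduces to a mechanical verification based on continuity and compactness under A2.
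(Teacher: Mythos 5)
Your overall architecture (Lipschitz bound $\Rightarrow$ bracketing, envelope moment bound on $A\cap K$ via compactness and the exponential tail of $z_n(\theta_0)$, then Lemma 4 of \citet{pollard85}) matches the paper's, and your verification of $E[H_n^2(\xi,K)]<\infty$ is essentially the paper's argument: they too use that on $A$ the ball $B_n$ eventually sits inside a $\delta$-enlargement $K_\delta$ of $K$, bound the normalized integrals of $|\partial f_\theta|$ and $|\partial^2 f_\theta|$ by $\sup_{\Theta_n}(|\partial f_\theta(\xi)|/f_\theta(\xi)+\eta)$ via uniform continuity on $K\times\Theta_n$, and control $z_n(\theta)=z_n(\theta_0)\cdot P_{\theta_0}(B_n)^{-1}\int_{B_n}f_\theta$ through the moments of $z_n(\theta_0)$.

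However, there is a genuine gap in your final step. You apply Pollard's Lemma 4 as if the indexed summands $[\psi_n(\xi_i,\theta,K)-\psi_n(\xi_i,\theta_0^{(n)},K)]I_A$ were independent across $i$: your "routine covering argument" and the claim that the lemma "yields the stochastic equicontinuity assertion" implicitly use that the variance of a sum of $n$ bracketing envelopes is $n$ times an individual variance. That is false here, because each $\psi_n(\xi_i,\theta,K)$ depends on the nearest neighbour ball $B_n(\xi_i)$ and hence on the entire sample; the summands are locally dependent. The paper closes exactly this hole by proving
\[
\mbox{Var}\Bigl(\frac{1}{\sqrt{n}}\sum_i H_n(\xi_i,K)\Bigr)=\mathcal{O}\bigl(\mbox{Var}(H_n(\xi_1,K))\bigr),
\]
using the conditional decomposition over the five nearest-neighbour configurations $D_1,\ldots,D_5$ from the proof of Proposition 6 (independence given $D_5$, and $P(D_i)=\mathcal{O}(1/n)$ for $i\le 4$), together with the bound $E[\sup_{\Theta_n}|\psi_n(\xi_i,\theta,K)-\psi_n(\xi_i,\theta_0^{(n)},K)|^2 I_A]\le E[H_n(\xi_i,K)]^2\alpha_n^2\to 0$. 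Without this variance-of-sums control the maximal/chaining inequality underlying Lemma 4 does not apply. This is not a pedantic point: the paper's Discussion explicitly faults another author's asymptotic normality proof for overlooking precisely this local dependence between nearest neighbours, so you should add the covariance argument before invoking Pollard's lemma. (A minor additional remark: you quietly strengthen the hypothesis by choosing $K\subset\{g>0\}$ and invoking $\inf_{K\times\bar\Theta_0}f_\theta>0$; the paper instead works with uniform continuity of $|\partial f_\theta|/f_\theta$ on $K\times\Theta_n$, which amounts to the same implicit positivity requirement, so this is a difference of presentation rather than substance.)
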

\begin{proof} For the bracketing condition to be fulfilled we need to show that  $E[H_n(\xi,K)] < b_1(h,K) <\infty$. Since this follows from $E[H_n(\xi,K)]^2 < b_2(h,K) <\infty$, we are going to prove that
\begin{equation} \label{brackE1} E\left[\sup_{\Thn} |h''(z_n(\theta))z_n^2(\theta)| [f_{\rm{max}}^{(1)}(\xi)]^2 \cdot I_{A} I_K(\xi)\right]^2 < \infty,  \end{equation}
\begin{equation} \label{brackE2} E\left[ \sup_{\Thn} |h'(z_n(\theta)) z_n(\theta)| f_{\rm{max}}^{(2)}(\xi) \cdot I_{A} I_K(\xi)\right]^2 < \infty. \end{equation}
Define the closed $\delta$-neighbourhood $K_{\delta}=\{x\in \mathbb{R}^d: \, d(x,K)\le \delta\}$, where $d(x,K)=\inf\{d(x,y): \, y\in K\}$.
When $\xi \in K$ and $\omega\in A$, we have for $n$ large enough that $B_n \subset K_{\delta}$.
Therefore, for large $n$,
\[  \sup_{\Thn} \left( \frac{1}{P_{\thg}(B_n)} \int_{B_n} \frac{|\partial f_{\thg}(y)|}{f_{\thg}(y)} dP_{\thg}(y)   \right)^4 I_K(\xi)I_{A} \]
\[ \le \sup_{\Thn} \left( \frac{|\partial f_{\thg}(\xi)|}{f_{\thg}(\xi)} +\eta \right)^4 I_K(\xi)I_{A}\le N_K,  \]
where the last inequality holds because ${|\partial f_{\thg}(y)|}/{f_{\thg}(y)}$ is uniformly continuous on $K\times \Thn$. In a similar way we obtain
\[ \left(\sup_{\Thn} \frac{1}{P_{\thg}(B_n)} \int_{B_n} |\partial^2f_{\thg}(y)| dy \cdot I_K(\xi)I_{A}\right)^2  \le \sup_{\Thn} \left( \frac{|\partial^2 f_{\thg}(\xi)|}{f_{\thg}(\xi)}  +\eta \right)^2 I_K(\xi)I_{A} <M_K. \]
Since $z_n(\theta)=z_n(\theta_0)\cdot \frac{1}{P_{\theta_0}(B_n)}\int_{B_n} f_{\theta}(y) dy$ and $z_n(\theta_0)$ has moments of all orders, (\ref{brackE1}) and (\ref{brackE2}) follow for our functions $h_1$, $h_2$ and $h_5$.
The finite expectation $EH^2_n(\xi,K)<b_2(h,K)$ implies
 \[ E\left[ \sup_{\Thn} |\psi_n(\xi_i,\theta,K)-\psi_n(\xi_i,\theta_0^{(n)},K)|^2I_{A} \right] \le E[H_n(\xi_i,K)]^2 \alpha_n^2 \to 0 \quad \mbox{as} \quad n\to \infty,   \]
and that the bracketing functions have finite variance. Moreover, in the same way as in the proof of Proposition \ref{Prop6} it can be shown that
\[ \mbox{Var}\left(\frac{1}{\sqrt{n}} \sum_i H_n(\xi_i,K)\right) =\mathcal{O}(\mbox{Var}(H_n(\xi_1,K))), \]
see also p.~306 and the proof of Lemma 4 in \citet{pollard85}.
As $P(A^c)<\varepsilon/2$, (\ref{resConv11}) follows due to Lemma 4 in \citet{pollard85}.
\end{proof}
As the last step we will use Lemma 3 in \cite{huber67} and prove the asymptotic normality of $\hat{\thg}_n$.
\begin{theorem} Let $\hat{\theta}_n \stackrel{p}{\to} \tho$ hold. Suppose $I(\theta_0)$ is positive definite and the assumptions of Proposition 6, 7 and 8 are satisfied.
Then
\[ \sqrt{n}(\hat{\thg}_n-\thon) \stackrel{D}{\to} \mathcal{N}\left(0,\frac{\sigma_q^2}{b_h^2}I(\theta_0)^{-1} \right),    \]
where $b_h=E[h''(Z)Z^2]$.
\end{theorem}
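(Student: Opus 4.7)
The plan is to bolt together the three steps laid out in Section 1.3. By the hypothesis $\hat\theta_n\stackrel{p}{\to}\theta_0$ together with $\theta_0^{(n)}\to\theta_0$ (which follows from Lemma \ref{UnifLambda}), the event $\{\hat\theta_n\in\Theta_n\}$ has probability tending to one, and $\hat\theta_n$ lies in the interior of $\Theta$. On that event one has $\frac{1}{\sqrt n}\sum_i \psig\big|_{\theta=\hat\theta_n} = o_p(1)$, either because $\nabla S_n(\hat\theta_n)=0$ at an interior maximum, or, for the $c_n$-approximate maximizer, by combining the defining inequality $S_n(\hat\theta_n)\ge -c_n+\sup_\theta S_n(\theta)$ with the smoothness of $S_n$ guaranteed by A2.

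Evaluating the stochastic differentiability result (\ref{resConv1}) -- which was established via Propositions \ref{HuberKc} and \ref{HuberK} -- at $\theta=\hat\theta_n$ and rearranging gives
\[ \sqrt n\,\lambda_n(\hat\theta_n) + \frac{1}{\sqrt n}\sum_{i=1}^n \psion = o_p\bigl(1+\sqrt n |\hat\theta_n-\theta_0^{(n)}|\bigr). \]
Since $\lambda_n(\theta_0^{(n)})=0$ and $\lambda_n$ is continuously differentiable in a neighbourhood of $\theta_0$ by Lemma \ref{lemma2}, the mean value theorem supplies an intermediate point $\tilde\theta_n$ with $\lambda_n(\hat\theta_n)=V_n(\tilde\theta_n)(\hat\theta_n-\theta_0^{(n)})$, and $\tilde\theta_n\to\theta_0$ in probability. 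The uniform convergence $V_n\to V$ on $\Theta_0$ from Lemma \ref{lemma2} then yields $V_n(\tilde\theta_n)\stackrel{p}{\to}V(\theta_0)$. Standard bootstrapping of the displayed equation (take norms, use $\sum\psi_n=O_p(1)$, absorb the $o_p(\sqrt n|\hat\theta_n-\theta_0^{(n)}|)$ term on the left) upgrades the right-hand side to plain $o_p(1)$, giving
\[ V_n(\tilde\theta_n)\,\sqrt n (\hat\theta_n-\theta_0^{(n)}) = -\frac{1}{\sqrt n}\sum_{i=1}^n \psion + o_p(1). \]

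The crux is then the identification $V(\theta_0)=b_h I(\theta_0)$. Writing $\psi_j(\xi,\theta)=v(Z f_\theta(\xi)/g(\xi))\,\partial_j f_\theta(\xi)/f_\theta(\xi)$ with $Z\perp \xi$ (Proposition \ref{ConvDistrProp}), differentiating in $\theta_l$, and evaluating at $\theta_0$ (so that $f_{\theta_0}=g$) produces two terms; the second vanishes in expectation after exchanging expectation and an $x$-integration, because $\int \partial_l\partial_j f_{\theta_0}=0$. Using $v'(z)z-v(z)=h''(z)z^2$, the first term gives
\[ V^{(j,l)}(\theta_0) = E[h''(Z)Z^2]\cdot E_\xi\!\left[\frac{(\partial_l f_{\theta_0})(\partial_j f_{\theta_0})}{g^2}\right] = b_h\, I^{(j,l)}(\theta_0). \]
Strict concavity of $h$ at its maximum $x=1$ makes $b_h<0$, hence $V(\theta_0)$ is invertible.

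Putting it all together: by Proposition \ref{Prop6} the sum $\frac{1}{\sqrt n}\sum\psion$ has the same limit distribution as $\frac{1}{\sqrt n}\sum\tilde\psi_n(\xi_i,\theta_0)$, which by Proposition 4 is $\mathcal{N}(0,\sigma_q^2 I(\theta_0))$. Slutsky's theorem, applied to the display above with $V_n(\tilde\theta_n)^{-1}\stackrel{p}{\to}(b_h I(\theta_0))^{-1}$, yields
\[ \sqrt n (\hat\theta_n-\theta_0^{(n)})\stackrel{D}{\to} -\bigl(b_h I(\theta_0)\bigr)^{-1}\mathcal{N}(0,\sigma_q^2 I(\theta_0)) = \mathcal{N}\!\left(0,\tfrac{\sigma_q^2}{b_h^2}I(\theta_0)^{-1}\right), \]
as claimed. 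The main obstacle I anticipate is the Jacobian computation $V(\theta_0)=b_h I(\theta_0)$: it requires carefully exchanging the expectation over $Z$ with the $x$-integration (which the uniform integrability of Lemma \ref{lemma2} underwrites) and tracking the cancellation between the two differentiated factors; the rest is essentially bookkeeping on top of the ingredients already supplied in Sections 2--4.
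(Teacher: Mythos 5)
Your proposal is correct and follows essentially the same route as the paper: evaluate the stochastic differentiability condition at $\hat\theta_n$ (you unroll Huber's Theorem 3 where the paper simply cites it), apply the mean value theorem to $\lambda_n$ together with the uniform convergence $V_n\to V$ from Lemma \ref{lemma2}, bootstrap to $\sqrt n\,|\hat\theta_n-\theta_0^{(n)}|=\mathcal{O}_P(1)$, and finish with Slutsky using $V(\theta_0)=b_h I(\theta_0)$. The one substantive addition is that you actually carry out the identification $V(\theta_0)=b_h I(\theta_0)$ (via $v'(z)z-v(z)=h''(z)z^2$ and $\int \partial_l\partial_j f_{\theta_0}\,dx=0$), which the paper only asserts; conversely, your claim that the $c_n$-approximate maximizer satisfies $\frac{1}{\sqrt n}\sum_i\psi_n(\xi_i,\hat\theta_n)=o_p(1)$ really needs $c_n\to 0$ fast enough (e.g.\ $nc_n\to 0$), a hypothesis of Huber's Theorem 3 that the paper glosses over in exactly the same way.
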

\begin{proof}
Propositions \ref{HuberKc} and \ref{HuberK} imply
\[
 \sup_{\Thn} \frac{\left|\juur \sum_i (\psig -\psion) -\sqrt{n} \lambda_n(\thg) \right|}{1+\sqrt{n}|\thg-\thon|} \stackrel{p}{\to} 0. \]
Applying the mean value theorem to $\lambda_{n,j}(\theta)$ we obtain that there exists $\tilde{\theta}_{n,j}$ such that
$$\lambda_{n,j}(\thg)=V_n^{(j)}(\tilde{\theta}_{n,j})(\thg-\thon), \quad j=1,\ldots,q,$$
where $V_n^{(j)}(\cdot)$ denotes the $j$th row of the matrix $V_n(\cdot)$. Define a matrix $V_n^{\ast}(\thg)$, where the rows are given by $V_n^{(j)}(\tilde{\theta}_{n,j})$, $j=1,\ldots,q$. We use the argument $\thg$ in $V_n^{\ast}$ to indicate that it comes from an application of the mean value theorem to the components of $\lambda_n(\thg)$. As $V_n^{\ast}(\thg)\to V(\theta_0)=b_h I(\theta_0)$ uniformly for $\thg \in \Thn$, it follows that $V_n^{\ast}(\thg)$ is invertible for large $n$ and $\theta \in \Thn$. Therefore, for some $C>0$,
\[ |\thg-\thon| \le |V_n^{\ast}(\thg)^{-1}||\lambda_n(\thg)| \le (|V(\theta_0)^{-1}|+\varepsilon)|\lambda_n(\thg)| =C |\lambda_n(\thg)|.  \]
It follows that
\[  \sup_{\Thn} \frac{\left|\juur \sum_i (\psig -\psion) -\sqrt{n} \lambda_n(\thg) \right|}{1+\sqrt{n}|\lambda_n(\thg)|} \stackrel{p}{\to} 0, \]
which corresponds to Lemma 3 in \cite{huber67}. Applying Theorem 3 in \cite{huber67} and using the consistency of $\hat{\thg}_n$ gives
\[ \frac{1}{\sqrt{n}} \sum_i \psion +\sqrt{n}\lambda_n(\hat{\thg}_n) \stackrel{p}{\to} 0, \]
where $\frac{1}{\sqrt{n}} \sum_i \psion$ is asymptotically normally distributed with mean zero and covariance matrix $\sigma_q^2 I(\theta_0)$.
It follows that $\sqrt{n}\lambda_n(\hat{\thg}_n) \stackrel{D}{\to} \mathcal{N}(0,\sigma_q^2 I(\theta_0))$.
Applying the mean value theorem again gives that for some $\tilde{\theta}_{n,1},\ldots,\tilde{\theta}_{n,q}$ depending on $\hat{\thg}_n$, we can define a matrix $V_n^{\ast}(\hat{\thg}_n)$ so that  $\sqrt{n}\lambda_n(\hat{\thg}_n)=V_n^{\ast}(\hat{\thg}_n)\sqrt{n}(\hat{\thg}_n-\thon)$. As
$\sqrt{n}(\hat{\thg}_n-\thon)=V_n(\theta_0)^{-1}V_n(\theta_0)\sqrt{n}(\hat{\thg}_n-\thon)$, we obtain
\[|\sqrt{n}(\hat{\thg}_n-\thon)|\le |V_n(\theta_0)^{-1}| (|V_n^{\ast}(\hat{\thg}_n)\sqrt{n}(\hat{\thg}_n-\thon)| +|V_n(\theta_0)-V_n^{\ast}(\hat{\thg}_n)||\sqrt{n}(\hat{\thg}_n-\thon)|) \]
\[ \le |V_n(\theta_0)^{-1}| \left( \mathcal{O}_P(1)+o_P(1)\sqrt{n}|\hat{\thg}_n-\thon|  \right). \]
Thus, $ |\sqrt{n}(\hat{\thg}_n-\thon)| \le \mathcal{O}_P(1)$. It follows that
\[ \sqrt{n}\lambda_n(\hat{\thg}_n) =V_n(\theta_0)\sqrt{n}(\hat{\thg}_n-\thon) +o_P(1)=V(\theta_0)\sqrt{n}(\hat{\thg}_n-\thon) +o_P(1), \]
and thus,
$$\sqrt{n}(\hat{\thg}_n-\thon) \stackrel{D}{\to}\mathcal{N}(0, V(\thg_0)^{-1}\sigma_q^2 I(\theta_0) [V(\thg_0)^{-1}]^{T}).$$
Using that $V(\tho)=b_h I(\theta_0)$, the assertion follows.
\end{proof}
\section{Discussion}
For univariate spacings asymptotic normality of GMSP estimators has been shown in \citet{GhoshJam}. Recently, \citet{luong} also considered consistency and asymptotic normality of univariate GMSP estimates. Since the author has overlooked the local dependence between nearest neighbours, the proof of asymptotic normality in \citet{luong} is not correct and thus also the derived asymptotic variance is incorrect. \citet{GhoshJam} showed that the smallest variance in the asymptotic distribution was obtained for $h(x)=\ln(x)$ and that this smallest variance coincides with the Cram\'{e}r-Rao lower bound. We have calculated the constants ${\sigma_q^2}/{b_h^2}$ in the asymptotic covariance matrix for the $h$-functions studied in this article, see Table \ref{VarConst}. The smallest variance is obtained for
$h_1(x)=\ln x-x+1$. For $h_5 (x) = \mbox{sgn}(1-\alpha)(x^{\alpha}-\alpha x +\alpha -1)$ the variance increases with increasing values of $\alpha$ and
when $\alpha \searrow 0$, the variance tends to the variance of $h_1$.
\newline \newline
Table \ref{VarConst} here.
\newline \newline
In this article, we have proved asymptotic normality of the generalized maximum spacing estimate $\hat{\thg}_n$ around $\thon$, where $\thon$ maximizes the expectation function $E[h(z_{i,n}(\thg))]$.
For the asymptotic normality to hold around $\theta_0$, it has to be shown that $\sqrt{n}(\thon-\theta_0)\to 0 $. According to the mean value theorem, for some constant $C>0$,
\[ \sqrt{n}|\thon-\theta_0| \le C\sqrt{n} |\lambda_n(\theta_0)|.    \]
Thus, if $\sqrt{n} |\lambda_n(\theta_0)|\to 0$, then $\sqrt{n}(\thon-\theta_0)$ follows. The behaviour of $\lambda_n(\theta_0)$ depends on what parameters are considered. In the case of multivariate normal distribution $\mathcal{N}_d(\mu_0,\Sigma_0)$ it follows because of symmetry that $E[h(z_{i,n}(\thg))]$ is maximized by $\mu_0$ regardless of $\Sigma_0$. Thus, $\mu_0^{(n)}=\mu_0$.
For bivariate normal distribution with $\theta=(\mu_1,\mu_2,\sigma_1,\sigma_2,\rho)$ we have studied the behaviour of $\lambda_n(\theta_0)$ in simulation studies for the following parameter vector:
$\theta_0=(1,2,1,1,0.5)$. We simulated a sample of $n$ observations from this distribution and calculated for a randomly chosen observation in the sample the quantity
\[ h'(z_{i,n}(\theta_0))z_{i,n}(\theta_0) \frac{1}{P_{\theta_0}(B_n(\xi_i))} \int_{B_n(\xi_i)} (\nabla f_{\thg} (y))_{\theta=\theta_0} dy.  \]
This procedure was repeated for $m=10000$ samples and $\lambda_n(\theta_0)$ was estimated with the average of the 10000 values. In Table $\ref{compsig1}$, the mean values over 20 repetitions are presented for the component of $\sqrt{n}\lambda_n(\theta_0)$ that corresponds to $\sigma_1$. We calculated the values for $h_1$, $h_2$ and $h_3$ with $p=2$.
It can be seen in Table $\ref{compsig1}$ that for all the considered $h$-functions the estimated values of $\sqrt{n}\lambda_n(\theta_0)$ for the component corresponding to $\sigma_1$ decrease when $n$ increases and approach slowly zero. The same behaviour can be observed for $\sigma_2$ and the correlation parameter $\rho$. Thus, the simulation results indicate that $\sqrt{n}\lambda_n(\theta_0)\to 0$ as $n\to \infty$ for the components corresponding to $\sigma_1$, $\sigma_2$ and $\rho$, although the convergence is very slow.
\newline \newline
Table \ref{compsig1} here.
\newline \newline
For most distributions, it is not difficult to check assumptions A3, A7, A9 and A10. Remark 1 is useful for checking assumptions A4, A6 and A8.
It is not difficult to verify that our assumptions are satisfied for the class of multivariate normal distributions and for finite mixtures of normal distributions.
\subsection*{Corresponding author} e-mail: kristi.kuljus@ut.ee; address: Institute of Mathematics and Statistics, J.~Liivi 2, 50 409, Tartu, Estonia
\subsection*{Acknowledgments}
This work is supported by Estonian institutional research funding IUT34-5.
\newpage
\begin{table} [h!]
\centering
\caption{Constants ${\sigma_q^2}/{b_h^2}$ in the asymptotic covariance matrix for different $h$-functions. \label{VarConst}}
\begin{tabular}{|l|r|}
  \hline
    $h(x)$ &  ${\sigma_q^2}/{b_h^2}$ \\
  \hline
    $h_1(x)=\ln x-x+1$ &  1.8434    \\
    $h_2(x)=(1-x)\ln x$ & 2.2130     \\
    $h_5(x)=\mbox{sgn}(1-\alpha)(x^{\alpha}-\alpha x +\alpha -1)$  &  $ \alpha=0.1 \quad$ 1.9265 \\
                      & $ \alpha=0.5 \quad$ 2.3421  \\
                      & $ \alpha=0.9 \quad$ 2.7493  \\
                      & $ \alpha=2 \quad \,\,\,\,\,  $ 3.6546    \\
  \hline
\end{tabular}
\end{table}
\begin{table} [h!]
\centering
\caption{Estimated values of $\sqrt{n} \lambda_n(\theta_0)$ for the component corresponding to $\sigma_1$ in the case of bivariate normal distribution with the parameters $(\mu_1,\mu_2,\sigma_1,\sigma_2,\rho)=(1,2,1,1,0.5)$. \label{compsig1}}
\begin{tabular}{|c|c|c|c|}
  \hline
   $\sqrt{n}$  & $h_1(x)=\ln x-x+1$ & $h_2(x)=(1-x)\ln x$ & $h_3(x)=-(1-\sqrt{x})^2$ \\
  \hline
   10 & 0.3910 & 1.1613 & 0.2769 \\
   30 & 0.3641 & 1.0580 & 0.2514 \\
   40 &  0.3399 &   1.0025  &  0.2368 \\
   60 &  0.2796  &  0.8124  &   0.1924 \\
   100 & 0.2213  &   0.6450  &  0.1527 \\
   200 & 0.1576  &  0.4673  &   0.1101 \\
   500 &  0.1233  &  0.3504  &  0.0836  \\
   700 &  0.0382  &  0.1193  &  0.0282  \\
  \hline
\end{tabular}
\end{table}
\newpage
\section{Appendix}
\noindent \textbf{\emph{Proof of Lemma \ref{UnifLambda}}}.
\begin{proof} Recall that $\la_n(\thg)= E[ \psig ]$ and $\la(\thg)=E[{\psi}(\xi_i,\thg)]$. Thus, we  can suppress $i$ in the notation and write ${\psi}_n(\xi,\thg)$, $\tilde{\psi}_n(\xi,\thg)$,
${\psi}(\xi,\thg)$ and $z_n(\tho)$ for the random quantities of interest. Since uniform convergence can be proved componentwise, we will write $\partial f_{\thg}(\xi)$ to emphasize that the same approach holds for any component
$\partial/\partial \thg_j$, $j=1,\ldots,q$. We suppress $j$ also in vector notations $\psi_{n,j}$, $\psi_j$ etc. The uniform convergence of $\la_n(\thg)$ to  $\la(\thg)$ in $\Theta_0$ holds if
\[ \sup_{\thg \in \Theta_0} |\la_n(\thg) -\la(\thg)| \to 0 \quad \mbox{as} \quad n\to \infty.  \]
Thus, we will study
\[ \sup_{\Theta_0} |E[{\psi}_n(\xi,\thg)-{\psi}(\xi,\thg)]| \le   \sup_{\Theta_0} |E[{\psi}_n(\xi,\thg)-\tilde{\psi}_n(\xi,\thg)] | \]
\[ +  \sup_{\Theta_0} |E[\tilde{\psi}_n(\xi,\thg) -{\psi}(\xi,\thg)]| =\mbox{Term}_{\textrm{I}} +\mbox{Term}_\textrm{{II}}.  \]
We will show that both terms converge to zero under the assumptions of the lemma.
\newline \newline
\noindent \textbf{Term}$_\textbf{\textrm{{II}}}$. Observe that
\[ \tilde{\psi}_n(\xi,\thg) -{\psi}(\xi,\thg) =
 \left[  v\left( z_n(\tho) \frac{f_{\thg}(\xi)}{g(\xi)} \right) - v\left( Z \frac{f_{\thg}(\xi)}{g(\xi)} \right)  \right] \frac{\partial f_{\thg}(\xi)}{f_{\thg}(\xi)}. \]
We will exemplify the proof using $h_3$, the proof is similar for other $h$-functions. For $h_3$ we have $h_3'(z)z=-z+\sqrt{z}$, therefore
\[ \tilde{\psi}_n(\xi,\thg) -{\psi}(\xi,\thg)= [Z-z_n(\tho)] \frac{f_{\thg}(\xi)}{g(\xi)} \frac{\partial f_{\thg}(\xi)}{f_{\thg}(\xi)}
+ \left[\sqrt{z_n(\tho)}-\sqrt{Z}\right] \sqrt{\frac{f_{\thg}(\xi)}{g(\xi)}} \frac{\partial f_{\thg}(\xi)}{f_{\thg}(\xi)}.   \]
Because $\sqrt{f_{\thg}(\xi)}/\sqrt{g(\xi)}\le 1+f_{\thg}(\xi)/g(\xi)$, we obtain
\[ \sup_{\Theta_0} |E[\tilde{\psi}_n(\xi,\thg) -{\psi}(\xi,\thg)]| \le \left| E(z_n(\tho)-Z)\right| \sup_{\Theta_0} E\left[ \frac{|\partial f_{\thg}(\xi)|}{g(\xi)} \right] \]
\[ + \left| E\left(\sqrt{z_n(\tho)}-\sqrt{Z}\right) \right| \sup_{\Theta_0} E\left[ \frac{|\partial f_{\thg}(\xi)|}{f_{\thg}(\xi)} + \frac{|\partial f_{\thg}(\xi)|}{g(\xi)} \right]. \]
Since $z_n(\tho)\stackrel{D}{\to} Z$ and $\sqrt{z_n(\tho)}\stackrel{D}{\to} \sqrt{Z}$, and since the corresponding moments of $z_n(\tho)$ and $Z$ are finite, Term$_\textrm{{II}}\to 0$ follows because of convergence of the respective expected values and because of Assumptions A3 and A4.
\newline \newline
\noindent \textbf{Term}$_\textbf{\textrm{I}}$. According to Proposition \ref{ConvDistrProp}, ${\psi}_n(\xi,\thg)-\tilde{\psi}_n(\xi,\thg)\stackrel{D}{\to} 0$ for each $\thg$.
This implies that for any $\theta_1,\ldots,\theta_m\in \Theta_0$, where $m$ is any finite number, the respective finite-dimensional distribution converges to a zero-vector of length $m$ in distribution. Observe that
${\psi}_n(\xi,\thg)-\tilde{\psi}_n(\xi,\thg)$ are continuous functions of $\thg$ in a neighbourhood of $\tho$. We will prove tightness of $\{ \tilde{\psi}_n(\xi,\thg) \}$ and
$\{{\psi}_n(\xi,\thg)\}$ and use that this together with the convergence of finite-dimensional distributions implies
\[ \sup_{\Theta_0}|{\psi}_n(\xi,\thg)-\tilde{\psi}_n(\xi,\thg)| \stackrel{D}{\to} 0.\]
Since $\sup_{\Theta_0}|{\psi}_n(\xi,\thg)-\tilde{\psi}_n(\xi,\thg)|$ is uniformly integrable due to our assumptions, we then obtain
\[  \sup_{\Theta_0} \left|E[{\psi}_n(\xi,\thg)-\tilde{\psi}_n(\xi,\thg)]\right| \le E\left[ \sup_{\Theta_0} |{\psi}_n(\xi,\thg)-\tilde{\psi}_n(\xi,\thg)| \right] \to 0. \]
Since both ${\psi}_n(\xi,\tho)$ and $\tilde{\psi}_n(\xi,\tho)$ converge to ${\psi}(\xi,\tho)$ in distribution, ${\psi}_n(\xi,\tho)$ and $\tilde{\psi}_n(\xi,\tho)$ are tight according to Prohorov's theorem. To show tightness of $\tilde{\psi}_n(\xi,\thg)$, take now arbitrary $\varepsilon>0$ and $\eta>0$. Choose a compact set $K\subset \mathbb{R}^d$ and a constant $M>0$ such that
$P(\xi \in K)>1-\eta/4$, $\sup_n P(z_n(\tho)>M)<\eta/4$. Consider arbitrary $\thg_1$, $\thg_2$ in $\Theta_0$. Applying the equality
\[ a(\thg_1)b(\thg_1)- a(\thg_2)b(\thg_2) = \left( a(\thg_1)-a(\thg_2) \right) b(\thg_1) +\left( b(\thg_1)-b(\thg_2) \right)a(\thg_2),  \]
we obtain
\[ \left| \tilde{\psi}_n(\xi,\thg_1)-\tilde{\psi}_n(\xi,\thg_2)\right| I_K(\xi)I(z_n(\tho)\le M)  \]
\[   \le \left\{ \left| v\left( z_{n}(\theta_0) \frac{f_{\thg_1}(\xi)}{g(\xi)} \right)- v\left(z_{n}(\theta_0) \frac{f_{\thg_2}(\xi)}{g(\xi)} \right)\right|
\sup_{\Theta_0}\frac{|\partial f_{\thg}(\xi)|}{f_{\thg}(\xi)}  \right.   \]
\[ + \left. \left| \frac{\partial f_{\thg_1}(\xi)}{f_{\thg_1}(\xi)}- \frac{\partial f_{\thg_2}(\xi)}{f_{\thg_2}(\xi)} \right|
\sup_{\Theta_0} \left| v\left(z_{n}(\theta_0) \frac{f_{\thg}(\xi)}{g(\xi)} \right)\right| \right\} I_K(\xi) I(z_n(\tho)\le M). \]
Since our functions of interest are uniformly continuous on $K\times \Theta_0$, we obtain that there exists $\delta_1>0$ such that
\[ \left| \tilde{\psi}_n(\xi,\thg_1)-\tilde{\psi}_n(\xi,\thg_2) \right| I_K(\xi)I(z_n(\tho)\le M) < \varepsilon \]
whenever $|\thg_1-\thg_2|<\delta_1$.

Tightness of $\{{\psi}_n(\xi,\thg)\}$ follows analogously, but now we also need to bring in the set $A$, where $R_n(\omega)\to 0$ uniformly. Therefore, if $n$ is large enough and
$\omega \in A\cap \{ \xi\in K \} \cap \{ z_n(\tho) \le M \}$, there exists $\delta_2$ such that whenever $|\thg_1-\thg_2|<\delta_2$,
\[ \left| \frac{1}{P_{\tho}(B_n)} \int_{B_n} \frac{ f_{\thg_1}(y)}{g(y)} dP_{\tho}(y) - \frac{1}{P_{\tho}(B_n)} \int_{B_n} \frac{f_{\thg_2}(y)}{g(y)} dP_{\tho}(y)  \right|,  \]
\[ \left| \frac{1}{P_{\theta_1}(B_n)} \int_{B_n} {\partial f_{\thg_1}(y)}  dy - \frac{1}{P_{\theta_2}(B_n)} \int_{B_n} {\partial f_{\theta_2}(y)} dy  \right|  \]
become sufficiently small.
\end{proof}
%
%
\bibliographystyle{apa}
\bibliography{GMSPasnorm}

\begin{thebibliography}{}

\bibitem[\protect\astroncite{Bickel and Breiman}{1983}]{bibr}
Bickel, P.~J. and Breiman, L. (1983).
\newblock Sums of functions of nearest neighbor distances, moment bounds, limit
  theorems and a goodness of fit test.
\newblock {\em Ann.~Probab.}, 11(1):185--214.

\bibitem[\protect\astroncite{Billingsley}{1968}]{billingsley68}
Billingsley, P. (1968).
\newblock {\em Convergence of probability measures}.
\newblock Wiley, New York.

\bibitem[\protect\astroncite{Cheng and Amin}{1983}]{ChengAmin}
Cheng, R.~C. and Amin, N.~A. (1983).
\newblock Estimating parameters in continuous univariate distributions with a
  shifted origin.
\newblock {\em J.~R.~Stat.~Soc.~Ser.~B.~Stat.~Methodol.}, 45(3):394--403.

\bibitem[\protect\astroncite{Cram\'{e}r and Leadbetter}{1967}]{CramLead67}
Cram\'{e}r, H. and Leadbetter, M.~R. (1967).
\newblock {\em Stationary and related stochastic processes}.
\newblock Wiley, New York.

\bibitem[\protect\astroncite{Ekstr\"{o}m}{2001}]{Ekstr2001}
Ekstr\"{o}m, M. (2001).
\newblock Consistency of generalized maximum spacing estimates.
\newblock {\em Scand.~J.~Stat.}, 28:343--354.

\bibitem[\protect\astroncite{Ghosh and Jammalamadaka}{2001}]{GhoshJam}
Ghosh, K. and Jammalamadaka, S.~R. (2001).
\newblock A general estimation method using spacings.
\newblock {\em J.~Statist.~Plann.~Inference}, 93(1-2):71--82.

\bibitem[\protect\astroncite{Huber}{1967}]{huber67}
Huber, P.~J. (1967).
\newblock The behaviour of maximum likelihood estimates under nonstandard
  conditions.
\newblock In {\em Fifth Berkeley Symposium on Mathematical Statistics and
  Probability}, pages 221--233. University of California, Berkeley.

\bibitem[\protect\astroncite{Kuljus and Ranneby}{2015}]{KR2015}
Kuljus, K. and Ranneby, B. (2015).
\newblock Generalized maximum spacing estimation for multivariate observations.
\newblock {\em Scand.~J.~Stat.}, 42:1092--1108.

\bibitem[\protect\astroncite{Luong}{2018}]{luong}
Luong, A. (2018).
\newblock Unified asymptotic results for maximum spacing and generalized
  spacing methods for continuous models.
\newblock {\em Open {J}ournal of {S}tatistics}, 8:614--639.

\bibitem[\protect\astroncite{Pollard}{1985}]{pollard85}
Pollard, D. (1985).
\newblock New ways to prove central limit theorems.
\newblock {\em Econometric Theory}, 1.

\bibitem[\protect\astroncite{Ranneby}{1984}]{Ranneby84}
Ranneby, B. (1984).
\newblock The maximum spacing method. {A}n estimation method related to the
  maximum likelihood method.
\newblock {\em Scand.~J.~Stat.}, 11(2):93--112.

\bibitem[\protect\astroncite{Ranneby and Ekstr\"{o}m}{1997}]{RanEkstr97}
Ranneby, B. and Ekstr\"{o}m, M. (1997).
\newblock Maximum spacing estimates based on different metrics.
\newblock Research Report, Ume{\aa} University.

\bibitem[\protect\astroncite{Ranneby et~al.}{2005}]{RanJamTet}
Ranneby, B., Jammalamadaka, S.~R., and Teterukovskiy, A. (2005).
\newblock The maximum spacing estimation for multivariate observations.
\newblock {\em J.~Statist.~Plann.~Inference}, 129(1-2):427--446.

\bibitem[\protect\astroncite{Schilling}{1983}]{schill83}
Schilling, M.~F. (1983).
\newblock Goodness of fit testing in $\mathbb{R}^m$ based on the weighted
  empirical distribution of certain nearest neighbor statistics.
\newblock {\em Ann.~Statist.}, 11(1):1--12.

\bibitem[\protect\astroncite{Schilling}{1986}]{Schilling86}
Schilling, M.~F. (1986).
\newblock Mutual and shared neighbor probabilities: finite- and
  infinite-dimensional results.
\newblock {\em Adv.~Appl.~Prob.}, 18(2):388--405.

\bibitem[\protect\astroncite{van~der Vaart}{2000}]{vaart2000}
van~der Vaart, A.~W. (2000).
\newblock {\em Asymptotic Statistics}.
\newblock Cambridge University Press, New York.

\bibitem[\protect\astroncite{Zhou and Jammalamadaka}{1993}]{ZhouJam}
Zhou, S. and Jammalamadaka, S.~R. (1993).
\newblock Goodness of fit in multidimensions based on nearest neighbour
  distance.
\newblock {\em Nonparametric Statistics}, 2:271--284.

\end{thebibliography}
\end{document}